\theoremstyle{plain}
\newtheorem{Pocz}{Poczatek}[section]
\newtheorem{Proposition}[Pocz]{Proposition}
\newtheorem{Theorem}[Pocz]{Theorem}
\newtheorem{Corollary}[Pocz]{Corollary}
\newtheorem{Observation}[Pocz]{Observation}
\newtheorem{Question}[Pocz]{Question}
\newtheorem{Example}[Pocz]{Example}
\theoremstyle{definition}
\newtheorem{Definition}[Pocz]{Definition}
\theoremstyle{remark}
\newtheorem{Remark}[Pocz]{Remark}
\DeclareMathOperator*{\diam}{diam}
\def\diam{\mathrm{diam}}
\numberwithin{equation}{section}
\title[
Ends and simple coarse structures
]%
  {Ends and simple coarse structures}
\author{Jerzy ~Dydak}
\address{University of Tennessee, Knoxville, USA}
\email{jdydak@utk.edu}
\date{ \today
}
\keywords{coarse geometry, coarse structures, ends, Freundenthal compactification, Gromov boundary, Higson compactification}
\subjclass[2000]{Primary 54F45; Secondary 55M10}
\begin{document}
\maketitle

\tableofcontents

\begin{abstract}
This paper is devoted to introducing coarse structures in a very simple way, namely as an equivalence relation on the set of simple ends. As an application we show that Gromov boundary of every hyperbolic space is an example of a Higson corona and each Freundenthal compactification is an example of a Higson compactification.
\end{abstract}

\section{Introduction}
Abstract coarse structures were introduced by J.Roe \cite{Roe lectures} (see \cite{YuNo} for another exposition of coarse theory). Subsequently, equivalent structures, called large scale structures were introduced by J.Dydak and C.Hoffland \cite{DH}. 
This paper is devoted to a much simpler definition of majority of useful coarse structures. Namely, they are equivalence relations on the set of simple ends of sets equipped with a bounded structure.

\section{Bounded structures and simple ends}

This section is devoted to the first step in defining simple coarse structures. Namely, in order to define simple ends we need the concept of a bounded structure. The concept of ends is well-developed in the literature (see \cite{Peschke}, for example). Notice that those ends are usually equivalence classes of our simple ends.

\begin{Definition}
A \textbf{bounded structure} $\mathcal{B}$ on a set $X$ is a family of subsets of $X$ satisfying the following conditions:\\
1. $\{x\}\in \mathcal{B}$ for each $x\in X$,\\
2. $A\in \mathcal{B}$ if there is $C\in \mathcal{B}$ containing $A$,\\
3. $A\cup C\in \mathcal{B}$ if $A,C\in \mathcal{B}$ and $A\cap C\ne \emptyset$.

Elements of $\mathcal{B}$ are called \textbf{bounded subsets} of $X$.
\end{Definition}

\begin{Observation}
 Notice the difference between the notion of a bounded structure and that of a \textbf{bornology} (see \cite{Beer} and \cite{Hu}).
 Namely, each bornology is closed under arbitrary finite unions and a bounded structure is closed
 under finite unions of bounded sets that have non-empty intersection.
\end{Observation}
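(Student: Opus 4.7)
My plan for substantiating this observation is to (i) recall the definition of a bornology, (ii) identify precisely the axiom where the two notions diverge, and (iii) produce an explicit bounded structure that is not a bornology, so the distinction is genuine and not merely notational.

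First I would note that a bornology on $X$ is a family $\mathcal{B}\subseteq 2^X$ closed under inclusion, containing all singletons, and closed under \emph{arbitrary} finite unions. Comparing with the three axioms above, conditions 1 and 2 match the first two bornology requirements, while condition 3 is strictly weaker: it forces $A\cup C \in \mathcal{B}$ only when $A\cap C \neq \emptyset$. Consequently every bornology is automatically a bounded structure, and the remaining task is to show that the converse fails.

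For the separating example I would take any set $X$ with at least two points and let $\mathcal{B}$ consist of $\emptyset$ together with all singletons of $X$. Axiom 1 holds by construction; axiom 2 holds because each element of $\mathcal{B}$ has only itself and $\emptyset$ as subsets; and axiom 3 is vacuous, since any two distinct singletons are disjoint. However, for $x\neq y$ the union $\{x\}\cup\{y\}=\{x,y\}$ does not belong to $\mathcal{B}$, so $\mathcal{B}$ is not a bornology.

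The main ``obstacle'' here is essentially nil: the observation is definitional in character, and a two-point witness is all that is needed. If a richer example were desired, one could instead point to a geometric setting, e.g.\ the family of metrically bounded connected subsets of a disconnected metric space, where the same phenomenon arises naturally.
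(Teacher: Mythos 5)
Your proposal is correct: the Observation is purely definitional and the paper offers no proof of it, so your comparison of the axioms together with the explicit witness (all singletons plus $\emptyset$ on a set with at least two points, which satisfies axioms 1--3 vacuously but is not closed under the union $\{x\}\cup\{y\}$) is exactly the kind of substantiation intended, and it matches the paper's intent. One small caveat about your closing aside: the family of metrically bounded \emph{connected} subsets of a disconnected metric space is not literally a bounded structure, since axiom 2 forces it to contain all (possibly disconnected) subsets of its members; the natural geometric witness is instead the paper's own $\mathcal{B}_d$ for an $\infty$-pseudo-metric space with two points at infinite distance, where two bounded singletons have unbounded union.
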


\begin{Definition}\label{SimpleEndDef}
Suppose $(X,\mathcal{B})$ is a set $X$ equipped with a bounded structure $\mathcal{B}$.
A \textbf{simple end} in $(X,\mathcal{B})$ is a sequence $\{x_n\}_{n=1}^\infty$ in $X$ with the property that for any bounded set $A$ the set $\{n\in \mathbb{N} \mid x_n\in A\}$ is finite.
\end{Definition}

\begin{Example}
Any $\infty$-pseudo-metric space $(X,d)$ induces the bounded structure $\mathcal{B}_d$
consisting of subsets of $X$ of finite diameter. If $d$ is a metric, then a sequence $\{x_n\}_{n=1}^\infty$ in $X$ is a simple end if and only if $\lim\limits_{n\to\infty}d(x_n,p)=\infty$ for some, and hence for all, $p\in X$.
\end{Example}

\begin{Example}
Any topological space $(X,\mathcal{T})$ induces the bounded structure $\mathcal{T}_c$
consisting of pre-compact subsets of $X$, i.e. of subsets whose closure in $X$ is compact.
If $X$ is locally compact, then a sequence $\{x_n\}_{n=1}^\infty$ in $X$ is a simple end if and only if $\lim\limits_{n\to\infty}x_n=\infty$ in the one-point compactification $X\cup \{\infty\}$ of $X$.
\end{Example}

\begin{Example}
Any group $G$ induces the bounded structure $G_f$
consisting of finite subsets of $G$. If $G$ is finitely generated and $d$ is a left-invariant word metric on $G$, then a sequence $\{x_n\}_{n=1}^\infty$ in $G$ is a simple end if and only if $\lim\limits_{n\to\infty}d(x_n,p)=\infty$ for some, and hence for all, $p\in G$.
\end{Example}

\begin{Remark}
It is tempting to define simple ends directly, bypassing bounded structures. That means creating axioms along the following lines: $\mathcal{SE}$ is a set of simple ends if it has the following properties:\\
1. Every infinite subsequence of $s\in   \mathcal{SE}$ belongs to $\mathcal{SE}$,\\
2. No constant sequence belongs to $\mathcal{SE}$.

Such $\mathcal{SE}$ would lead to a bounded structure $\mathcal{B}$: $B\in \mathcal{B}$
if and only if no sequence in $B$ belongs to $\mathcal{SE}$. However, $\mathcal{B}$ is always a bornology, so that leads to a less general concept.

\end{Remark}

\section{Simple coarse structures}
\begin{Definition}\label{SCSDefinition}
Suppose $(X,\mathcal{B})$ is a set $X$ equipped with a bounded structure $\mathcal{B}$.
A \textbf{simple coarse structure} in $X$ is an equivalence relation on the set of simple ends of $(X,\mathcal{B})$.
\end{Definition}

\begin{Example}\label{MetricSCS}
Any $\infty$-pseudo-metric space $(X,d)$ induces the simple coarse structure $\mathcal{SCS}_d$ as follows: \\
1. its bounded sets are are subsets of $X$ of finite diameter,\\ 
2. two simple ends $\{x_n\}_{n=1}^\infty$ and $\{y_n\}_{n=1}^\infty$ are equivalent if and only if there is $M > 0$ such that
$d(x_n,y_n) < M$ for all $n\ge 1$.
\end{Example}

\begin{Example}\label{C0SCS}
Any $\infty$-pseudo-metric space $(X,d)$ induces the simple coarse structure $\mathcal{SCS}(C_0)_d$ as follows: \\
1. its bounded sets are are subsets of $X$ of finite diameter,\\ 
2. two simple ends $\{x_n\}_{n=1}^\infty$ and $\{y_n\}_{n=1}^\infty$ are equivalent if and only if 
$d(x_n,y_n)\to 0$ as $n\to\infty$.
\end{Example}

\begin{Example}\label{MetricExtensionSCS}
Any metric extension $(\bar X,d)$ of a topological space $(X,\mathcal{T})$ (i.e. $X\subset \bar X$ and the topology $\mathcal{T}$ is identical with the one induced on $X$ by the metric $d$) induces the simple coarse structure $SCS_m(\bar X,X)$ on $X$ as follows:\\
1. its bounded sets are subsets $B$ of $X$ so that there is $\epsilon > 0$
with $dist(b,\bar X\setminus X) > \epsilon$ for all $b\in B$,\\
2. two simple ends $\{x_n\}_{n=1}^\infty$ and $\{y_n\}_{n=1}^\infty$ are equivalent if and only if $\lim\limits_{n\to\infty} d(x_n,y_n)=0$.
\end{Example}

\begin{Example}\label{ExtensionSCS}
Any extension $\bar X$ of a topological space $(X,\mathcal{T})$ (i.e. $X\subset \bar X$ and the topology $\mathcal{T}$ is identical with the one induced on $X$ from $\bar X$) induces the simple coarse structure $SCS(\bar X,X)$ on $X$ as follows: \\
1. its bounded sets are subsets of $X$ whose closure in $\bar X$ is contained in $X$,\\
2. two simple ends $\{x_n\}_{n=1}^\infty$ and $\{y_n\}_{n=1}^\infty$ are equivalent if and only if, for any function $a:\mathbb{N}\to\mathbb{N}$ with $\lim\limits_{n\to \infty}a(n)=\infty$,
the
coronas of closures in $\bar X$ of $\{x_{a(n)}\}_{n=1}^\infty$ and $\{y_{a(n)}\}_{n=1}^\infty$ are equal, i.e. $cl(\{x_{a(n)}\}_{n=1}^\infty)\setminus X= cl(\{y_{a(n)}\}_{n=1}^\infty)\setminus X$.
\end{Example}

\begin{Example}\label{SCSInducedBySubalgebras}
Suppose $(X,\mathcal{B})$ is a set equipped with a bounded structure and $\mathcal{P}$
is a family of functions from $X$ to $[0,1]$.
$\mathcal{P}$ induces the simple coarse structure $SCS_\mathcal{P}$ on $X$ as follows:\\
1. its bounded sets are the same as in $\mathcal{B}$,\\
2. two simple ends $\{x_n\}_{n=1}^\infty$ and $\{y_n\}_{n=1}^\infty$ are equivalent if and only if 
$$\lim\limits_{n\to\infty} |f(x_n)-f(y_n)|=0$$
 for any $f\in \mathcal{P}$.
\end{Example}

\begin{Remark} Obviously, \ref{SCSInducedBySubalgebras} can be generalized to any family of functions to, possibly different, metric spaces. However, maps to the unit interval are of most interest.
\end{Remark}

\begin{Example}\label{GroupSCS}
Any group $G$ has the simple coarse structure $SCS_l(G)$ as follows: two simple ends $\{x_n\}_{n=1}^\infty$ and $\{y_n\}_{n=1}^\infty$ are equivalent if and only if there is a finite subset $F$ of $G$ such that $x_n^{-1}\cdot y_n\in F$ for all $n\ge 1$.
\end{Example}

The above example can be generalized to the next one:
\begin{Example}\label{TopologicalGroupSCS}
Any topological group $G$ induces the simple coarse structure $SCS_l(G)$ on $G$ as follows: two simple ends $\{x_n\}_{n=1}^\infty$ and $\{y_n\}_{n=1}^\infty$ are equivalent if and only if there is a compact subset $F$ of $G$ such that $x_n^{-1}\cdot y_n\in F$ for all $n\ge 1$.
\end{Example}

Given a metric space $(X,d)$, the \textbf{Gromov product} of $x$ and $y$ with respect to $a\in X$ is defined by
$$
\left< x,y\right>_a=\frac{1}{2}\big(d(x,a)+d(y,a)-d(x,y)\big).
$$

Recall that metric space $(X,d)$ is (Gromov) $ \delta-$\textbf{hyperbolic} if it satisfies the $\delta/4$-inequality:
$$
\left< x,y\right>_{a} \geq \min \{\left< x,z\right>_{a},\left< z,y \right> _{a}\}-\delta/4, \quad \forall x,y,z,a\in X.$$

$(X,d)$ is \textbf{Gromov hyperbolic} if it is $ \delta-$hyperbolic for some $\delta > 0$.

\begin{Example}\label{HyperbolicSCS}
Any Gromov hyperbolic space $X$ induces the simple coarse structure $\mathcal{H}(X)$ on $X$ as follows:
two simple ends $\{x_n\}_{n=1}^\infty$ and $\{y_n\}_{n=1}^\infty$ are equivalent if and only if and only if $\left< x_n,y_n\right>_p\to\infty$ for some point $p\in X$.

\end{Example}

\begin{Theorem}
Suppose $X$ is a $\sigma$-compact locally compact Hausdorff space.
 If $\bar X$ is a compactification of $X$, then the following two simple coarse structures induced on $X$ and based on pre-compact subsets of $X$ are equal:\\
1. two simple ends $\{x_n\}_{n=1}^\infty$ and $\{y_n\}_{n=1}^\infty$ are equivalent if and only if, for any function $a:\mathbb{N}\to\mathbb{N}$ with $\lim\limits_{n\to \infty}a(n)=\infty$,
the
coronas of closures in $\bar X$ of $\{x_{a(n)}\}_{n=1}^\infty$ and $\{y_{a(n)}\}_{n=1}^\infty$ are equal, i.e. $cl(\{x_{a(n)}\}_{n=1}^\infty)\setminus X= cl(\{y_{a(n)}\}_{n=1}^\infty)\setminus X$.\\
2. two simple ends $\{x_n\}_{n=1}^\infty$ and $\{y_n\}_{n=1}^\infty$ are equivalent if and only if 
$$\lim\limits_{n\to\infty} |f(x_n)-f(y_n)|=0$$
 for every continuous function $f:\bar X\to [0,1]$.
\end{Theorem}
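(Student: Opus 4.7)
The plan is to prove the two equivalence relations on simple ends coincide by establishing the two implications separately. Two preliminary observations underpin both directions. First, since $X$ is locally compact Hausdorff and dense in the compact Hausdorff space $\bar X$, $X$ is automatically open in $\bar X$, so the corona $\bar X\setminus X$ is closed (hence compact). Second, every accumulation point of a simple end $\{x_n\}$ in $\bar X$ lies in the corona: if an accumulation point $p$ lay in $X$, a pre-compact open neighborhood $V$ of $p$ in $X$ (available by local compactness) would be a bounded set of $X$ containing infinitely many $x_n$, contradicting Definition \ref{SimpleEndDef}. Moreover, a simple end takes infinitely many distinct values (otherwise some singleton, which is bounded, would contain infinitely many $x_n$), so by compactness of $\bar X$ the closure of any simple end has a nonempty corona.

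For the direction $(2)\Rightarrow(1)$, fix any $a:\NN\to\NN$ with $a(n)\to\infty$ and consider $p\in cl(\{x_{a(n)}\})\setminus X$. Suppose, aiming at a contradiction, that $p\notin cl(\{y_{a(n)}\})$. Since $\bar X$ is compact Hausdorff, hence normal, Urysohn's lemma furnishes a continuous $f:\bar X\to[0,1]$ with $f(p)=1$ and $f\equiv 0$ on $cl(\{y_{a(n)}\})$. As $p$ is an accumulation point of $\{x_{a(n)}\}$, the open set $f^{-1}((1/2,1])$ contains infinitely many $x_{a(n)}$, so $f(x_{a(n)})>1/2$ for infinitely many $n$, while $f(y_{a(n)})=0$ for all $n$. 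This contradicts the fact that $|f(x_n)-f(y_n)|\to 0$ (hypothesis $(2)$) passes to the subsequence indexed by $a$ because $a(n)\to\infty$. The reverse inclusion follows by symmetry.

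For $(1)\Rightarrow(2)$, suppose some continuous $f:\bar X\to[0,1]$ satisfies $|f(x_n)-f(y_n)|\not\to 0$. Pick $\epsilon>0$ and an increasing $a$ with $|f(x_{a(n)})-f(y_{a(n)})|\geq\epsilon$ for all $n$, then apply Bolzano-Weierstrass twice in $[0,1]$ to extract a further increasing subsequence $b$ with $f(x_{b(n)})\to s$ and $f(y_{b(n)})\to t$, where $|s-t|\geq\epsilon$. Every accumulation point $p$ of $\{x_{b(n)}\}$ in $\bar X$ satisfies $f(p)=s$: for any $\delta>0$, the open preimage $f^{-1}((f(p)-\delta,f(p)+\delta))$ contains infinitely many $x_{b(n)}$, forcing $|f(p)-s|\leq\delta$. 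Analogously, accumulation points of $\{y_{b(n)}\}$ have $f$-value $t$. By the preliminary observations, both coronas $cl(\{x_{b(n)}\})\setminus X$ and $cl(\{y_{b(n)}\})\setminus X$ are nonempty, yet they sit inside the disjoint fibers $f^{-1}(s)$ and $f^{-1}(t)$ respectively; hence they cannot coincide, contradicting hypothesis $(1)$.

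The main obstacle is that $\bar X$ need not be first countable, so the sequence $\{x_{b(n)}\}$ may fail to have a convergent subsequence in $\bar X$. The argument sidesteps this by working with accumulation points rather than subsequential limits, invoking Bolzano-Weierstrass only on the real-valued sequences $f(x_n)$ and $f(y_n)$, and using compactness of $\bar X$ solely to guarantee that an infinite discrete subset of $X$ has an accumulation point at all. The other delicate ingredient is the identification of accumulation points of simple ends with corona points, which reduces the equality of coronas to a statement about the asymptotic behavior of $f$-values and makes Urysohn's lemma applicable in the reverse direction.
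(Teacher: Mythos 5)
Your proof is correct and follows essentially the same route as the paper: Urysohn separation of a corona point from the other sequence's closure to contradict condition (2), and, conversely, passage to a subsequence on which the $f$-values converge to distinct limits so that the two (nonempty) coronas land in disjoint fibers of $f$. You merely fill in details the paper leaves implicit (the double Bolzano--Weierstrass extraction, the identification of corona points with accumulation points, and nonemptiness of the coronas), which strengthens rather than changes the argument.
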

\begin{proof}
Consider two simple ends $\{x_n\}_{n=1}^\infty$ and $\{y_n\}_{n=1}^\infty$. If there is a function $a:\mathbb{N}\to\mathbb{N}$ with $\lim\limits_{n\to \infty}a(n)=\infty$ so that
the
coronas of closures in $\bar X$ of $\{x_{a(n)}\}_{n=1}^\infty$ and $\{y_{a(n)}\}_{n=1}^\infty$ are not equal, we may assume $ x_0$ is in the corona of $\{x_{a(n)}\}_{n=1}^\infty$  and does not belong to the corona $Y$ of $\{y_{a(n)}\}_{n=1}^\infty$. In that case there is a continuous function
$f:\bar X\to [0,1]$ so that $f(x_0)=0$ and $f(Y)=1$. Hence $\lim\limits_{n\to\infty} |f(x_n)-f(y_n)|=0$ is false.

Conversely, if two simple ends $\{x_n\}_{n=1}^\infty$ and $\{y_n\}_{n=1}^\infty$ satisfy 
$\lim\limits_{n\to\infty} |f(x_n)-f(y_n)|\ne 0$ (or it does not exist) 
 for some continuous function $f:\bar X\to [0,1]$, then we may switch to subsequences $\{x_{a(n)}\}_{n=1}^\infty$ and $\{y_{a(n)}\}_{n=1}^\infty$
so that $\lim\limits_{n\to\infty} |f(x_{a(n)})-f(y_{a(n)})|=c\ne 0$.
In that case the coronas are disjoint.
\end{proof}

\begin{Theorem}
Suppose $X$ is a $\sigma$-compact locally compact Hausdorff space.
 If $X$ has a metric compactification $(\bar X,d)$, then the following two simple coarse structures induced on $X$ and based on pre-compact subsets of $X$ are equal:\\
1. two simple ends $\{x_n\}_{n=1}^\infty$ and $\{y_n\}_{n=1}^\infty$ are equivalent if and only if $\lim\limits_{n\to\infty} d(x_n,y_n)=0$.\\
2. two simple ends $\{x_n\}_{n=1}^\infty$ and $\{y_n\}_{n=1}^\infty$ are equivalent if and only if 
$$\lim\limits_{n\to\infty} |f(x_n)-f(y_n)|=0$$
 for every continuous function $f:\bar X\to [0,1]$.
\end{Theorem}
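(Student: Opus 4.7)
My plan is to prove the two implications separately, leveraging the fact that $\bar X$ is compact metric. For the implication $1\Rightarrow 2$, I would simply observe that any continuous $f:\bar X\to[0,1]$ is uniformly continuous with respect to $d$, since its domain is a compact metric space. Hence $d(x_n,y_n)\to 0$ forces $|f(x_n)-f(y_n)|\to 0$ automatically, and this direction is immediate.

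For $2\Rightarrow 1$ I would argue by contraposition. Suppose $d(x_n,y_n)\not\to 0$; then after passing to a subsequence $a(n)$ I may assume $d(x_{a(n)},y_{a(n)})\geq\epsilon$ for some fixed $\epsilon>0$. Because $\bar X$ is compact, I extract a further subsequence (still denoted $a(n)$) along which $x_{a(n)}\to x_\infty$ and $y_{a(n)}\to y_\infty$ in $\bar X$. Continuity of $d$ then yields $d(x_\infty,y_\infty)\geq\epsilon$, so in particular $x_\infty\neq y_\infty$. Urysohn's lemma on the normal (even metric) space $\bar X$ supplies a continuous $f:\bar X\to[0,1]$ with $f(x_\infty)=0$ and $f(y_\infty)=1$; then $f(x_{a(n)})\to 0$ and $f(y_{a(n)})\to 1$, so $|f(x_{a(n)})-f(y_{a(n)})|\to 1$, contradicting condition 2 for this $f$.

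A small side remark worth making is that $x_\infty,y_\infty$ necessarily lie in the corona $\bar X\setminus X$: otherwise, say $x_\infty\in X$, local compactness of $X$ supplies a pre-compact neighborhood of $x_\infty$ in $X$ which eventually contains the $x_{a(n)}$, contradicting the simple end property. This observation is not logically needed for the Urysohn step above (the argument works for any two distinct points of $\bar X$), but it matches the intuition that the two structures compare sequences that diverge to different points at infinity.

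I do not expect any serious obstacle: the proof is essentially a compactness-plus-uniform-continuity argument together with Urysohn's lemma, both of which are cleanly available from the hypotheses. One could alternatively deduce the result from the preceding theorem by showing that the ``metric distance tends to zero'' condition coincides with the ``coronas of subsequences agree'' condition, but the direct route outlined above is shorter.
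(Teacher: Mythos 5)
Your proof is correct and follows essentially the same route as the paper: one direction by uniform continuity of continuous functions on the compact metric space $\bar X$, the other by producing a continuous $f:\bar X\to[0,1]$ separating the two divergent sequences. Your extra step of extracting convergent subsequences and separating the distinct limit points $x_\infty\neq y_\infty$ is in fact a slightly more careful execution than the paper's phrasing (which asserts disjointness of the full coronas, a claim that strictly speaking also requires passing to subsequences), so nothing needs to change.
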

\begin{proof}
Obviously, given two simple ends $\{x_n\}_{n=1}^\infty$ and $\{y_n\}_{n=1}^\infty$  satisfying $\lim\limits_{n\to\infty} d(x_n,y_n)=0$, one can see that $$\lim\limits_{n\to\infty} |f(x_n)-f(y_n)|=0$$
 for every continuous function $f:\bar X\to [0,1]$ due to uniform continuity of $f$.

Conversely, if two simple ends $\{x_n\}_{n=1}^\infty$ and $\{y_n\}_{n=1}^\infty$ satisfy $d(x_n,y_n)\ge\epsilon > 0$,
 for some $\epsilon$ and all $n\ge 1$, then the
coronas of closures in $\bar X$ of $\{x_{n}\}_{n=1}^\infty$ and $\{y_{n}\}_{n=1}^\infty$ are disjoint. In that case one can construct a continuous function $f$ on $\bar X$ that is equal $0$ on one corona and $1$ on the other.
\end{proof}

\section{Roe coarse structures and large scale structures}

For basic facts related to the coarse category see \cite{Roe lectures}.

Recall that a {\bf coarse structure} $\mathcal{C}$ on $X$ is a family of subsets $E$
(called {\bf controlled sets})
of $X\times X$ satisfying the following properties:
\begin{enumerate}
\item The diagonal $\Delta=\{(x,x)\}_{x\in X}$ belongs to $\mathcal{C}$.
\item $E_1\in\mathcal{C}$ implies $E_2\in\mathcal{C}$ for every $E_2\subset E_1$.
\item $E\in\mathcal{C}$ implies $E^{-1}\in\mathcal{C}$, where $E^{-1}=\{(y,x)\}_{(x,y)\in E}$.
\item $E_1,E_2\in\mathcal{C}$ implies $E_1\cup E_2\in\mathcal{C}$.
\item $E, F\in\mathcal{C}$ implies $E\circ F\in\mathcal{C}$, where $E\circ F$
consists of $(x,y)$ such that there is $z\in X$ so that $(x,z)\in E$
and $(z,y)\in F$.
\end{enumerate}

Recall that the {\bf star} $st(B,\mathcal{U} )$ of a subset $B$ of $X$ with respect
to a family $\mathcal{U}$ of subsets of $X$ is the union of those elements of $\mathcal{U}$
that intersect $B$.
Given two families $\mathcal{B}$ and $\mathcal{U}$ of subsets of $X$,
$st(\mathcal{B},\mathcal{U})$ is the family $\{st(B,\mathcal{U})\}$, $B\in\mathcal{B}$,
of all stars of elements of $\mathcal{B}$ with respect to $\mathcal{U}$.

\begin{Definition}\label{LSStructureDef} \cite{DH} 
A {\bf large scale structure} $\mathcal{LSS}_X$ on a set $X$ is a non-empty set of families $\mathcal{B}$
of subsets of $X$ (called {\bf uniformly bounded} 
or {\it uniformly $\mathcal{LSS}_X$-bounded} once $\mathcal{LSS}_X$ is fixed)
satisfying the following conditions:
\begin{enumerate}
\item $\mathcal{B}_1\in\mathcal{LSS}_X$ implies $\mathcal{B}_2\in\mathcal{LSS}_X$ if each element of $\mathcal{B}_2$
consisting of more than one point
is contained in some element of $\mathcal{B}_1$.
\item $\mathcal{B}_1,\mathcal{B}_2\in\mathcal{LSS}_X$ implies $st(\mathcal{B}_1,\mathcal{B}_2)\in\mathcal{LSS}_X$.
\end{enumerate}
\end{Definition}

As described in \cite{DH} the transition between the two structures is as follows:\\
1. Given a uniformly bounded cover $\mathcal{U}$ of $X$, the set $\bigcup\limits_{B\in \mathcal{U}}B\times B$ is a controlled set,\\
2. Given a controlled set $E$, the family $\{E[x]\}_{x\in X}$ is uniformly bounded,
where $E[x]:=\{y\in X\mid (x,y)\in E\}$.

\begin{Definition}\label{MediumEndDef}
Suppose $(X,\mathcal{B})$ is a set $X$ equipped with a bounded structure $\mathcal{B}$.
A \textbf{medium end} in $(X,\mathcal{B})$ is a sequence $\{B_n\}_{n=1}^\infty$ of bounded non-empty subsets of $X$ with the property that for any bounded set $A$ the set $\{n\in \mathbb{N} \mid B_n\cap A\ne \emptyset\}$ is finite.
\end{Definition}

A simple coarse structure allows for defining of uniform boundedness of medium ends.

\begin{Definition}\label{MediumEndIsUniformlyBoundedDef}
Suppose $(X,\mathcal{B})$ is a set $X$ equipped with a bounded structure $\mathcal{B}$
and $\mathcal{SCS}$ is a simple course structure on $(X,\mathcal{B})$.
A medium end $\{B_n\}_{n=1}^\infty$ in $(X,\mathcal{B})$ is \textbf{uniformly bounded} if
for any choice $x_n, y_n\in B_n$ the two simple ends $\{x_n\}_{n=1}^\infty$ and $\{y_n\}_{n=1}^\infty$ are equivalent.
\end{Definition}

\begin{Definition}\label{ReflexiveSCS}
A simple coarse structure $\mathcal{SCS}$ is \textbf{reflexive} if 
for any two equivalent simple ends  $\{x_n\}_{n=1}^\infty$ and $\{y_n\}_{n=1}^\infty$ in $X$,  $\{\{x_n,y_n\}\}_{n=1}^\infty$ is a uniformly bounded medium end.
\end{Definition}

\begin{Corollary}
The simple coarse structures described in \ref{MetricSCS}, \ref{C0SCS}, \ref{MetricExtensionSCS}, \ref{SCSInducedBySubalgebras}, \ref{GroupSCS}, and \ref{HyperbolicSCS} are all reflexive.
\end{Corollary}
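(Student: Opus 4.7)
The plan is to unwind the definition: $\mathcal{SCS}$ is reflexive precisely when, for any two equivalent simple ends $\{x_n\}$ and $\{y_n\}$ and any selections $u_n,v_n\in\{x_n,y_n\}$, the resulting pair $\{u_n\},\{v_n\}$ is again a pair of equivalent simple ends. So for each of the six examples I would check exactly this substitution-invariance. The medium-end condition for $\{\{x_n,y_n\}\}$ is automatic, since both original sequences escape every bounded set, and any mixed sequence $\{u_n\}$ with $u_n\in\{x_n,y_n\}$ inherits this property. Thus the content is only in checking the equivalence relation.

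For Examples \ref{MetricSCS}, \ref{C0SCS}, and \ref{MetricExtensionSCS}, I would note the trivial monotonicity $d(u_n,v_n)\in\{0,d(x_n,y_n)\}$, so $d(u_n,v_n)\le d(x_n,y_n)$ termwise. Hence a uniform bound on $d(x_n,y_n)$ yields one on $d(u_n,v_n)$, and similarly $d(x_n,y_n)\to 0$ forces $d(u_n,v_n)\to 0$. For Example \ref{SCSInducedBySubalgebras}, the same monotonicity applies to $|f(u_n)-f(v_n)|\le|f(x_n)-f(y_n)|$ for each $f\in\mathcal{P}$. For Example \ref{GroupSCS}, if $F$ is a finite subset of $G$ with $x_n^{-1}y_n\in F$ for all $n$, then $u_n^{-1}v_n\in\{e\}\cup F\cup F^{-1}$, which is again finite.

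The only case that is not purely formal is \ref{HyperbolicSCS}. Here the Gromov product $\langle u_n,v_n\rangle_p$ takes one of the four values $d(x_n,p),\ d(y_n,p),\ \langle x_n,y_n\rangle_p$ depending on the choice of $u_n,v_n$. The crucial observation, which I would isolate as a preliminary remark, is that in a metric space any simple end $\{z_n\}$ satisfies $d(z_n,p)\to\infty$ for every $p\in X$, because the balls around $p$ are bounded. Applied to $\{x_n\}$ and $\{y_n\}$ this handles the first two possibilities, while the hypothesis $\langle x_n,y_n\rangle_p\to\infty$ handles the third. Taking the minimum shows $\langle u_n,v_n\rangle_p\to\infty$ for any mixed selection.

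The main obstacle I anticipated was the hyperbolic case, but it dissolves as soon as one notes that the Gromov product with a coinciding argument reduces to distance to the base point, and that simple ends are cofinal in the metric. The whole corollary therefore reduces to a case-by-case verification in which each clause is a one-line consequence of the defining inequality being monotone under pointwise replacement.
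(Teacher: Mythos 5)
The paper states this corollary with no proof at all, so there is no official argument to compare against; your case-by-case unwinding is evidently the verification the author has in mind, and for five of the six examples it is correct as written: the termwise monotonicity $d(u_n,v_n)\le d(x_n,y_n)$ and $|f(u_n)-f(v_n)|\le |f(x_n)-f(y_n)|$, the observation that $u_n^{-1}v_n\in\{1_G\}\cup F\cup F^{-1}$ in the group case, and the collapse of $\langle u_n,v_n\rangle_p$ to one of $d(x_n,p)$, $d(y_n,p)$, $\langle x_n,y_n\rangle_p$ together with the fact that $d(z_n,p)\to\infty$ for every simple end of a metric bounded structure are exactly the right points (you say ``four values'' but list three; the four selections collapse to three values, so this is only a slip of wording).

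The one claim that is too quick is that the medium-end condition for $\{\{x_n,y_n\}\}_{n\ge 1}$ is ``automatic''. Definition \ref{MediumEndDef} requires each $\{x_n,y_n\}$ to be a \emph{bounded} nonempty set, and that does not follow from the two ends escaping every bounded set: the axioms of a bounded structure only bound singletons, subsets, and unions of \emph{intersecting} bounded sets, so a two-point set need not be bounded — this is precisely the difference from a bornology that the paper's Observation stresses. In \ref{MetricSCS}, \ref{C0SCS}, \ref{MetricExtensionSCS}, \ref{GroupSCS} and \ref{HyperbolicSCS} the missing line is easy to supply (the equivalence itself bounds $d(x_n,y_n)$, at least for almost all $n$ in \ref{C0SCS}; in \ref{MetricExtensionSCS} both points lie at positive distance from $\bar X\setminus X$; pairs in a group are finite; a hyperbolic space is an honest metric space), so you should add it. In \ref{SCSInducedBySubalgebras}, however, $\mathcal{B}$ is an arbitrary bounded structure and the equivalence is tested only through $[0,1]$-valued functions, which give no control on whether $\{x_n,y_n\}\in\mathcal{B}$: if $\mathcal{B}$ consists of singletons and their subsets and $\mathcal{P}$ contains only constant functions, then all simple ends are equivalent while no pair $\{x_n,y_n\}$ with $x_n\ne y_n$ is bounded, so $SCS_{\mathcal{P}}$ is not reflexive in the literal sense of Definitions \ref{MediumEndDef} and \ref{ReflexiveSCS}. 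So for that example your argument — and indeed the corollary as literally stated — needs an implicit extra hypothesis, e.g. that all finite subsets of $X$ are $\mathcal{B}$-bounded, and you should make that hypothesis explicit rather than declare the medium-end condition automatic.
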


\begin{Proposition}\label{UnionOfUniformlyBoundedMediumEnds}
Suppose $(X,\mathcal{B})$ is a set $X$ equipped with a bounded structure $\mathcal{B}$
and $\mathcal{SCS}$ is a reflexive simple course structure on $(X,\mathcal{B})$. Given two uniformly bounded medium ends $\{B_n\}_{n=1}^\infty$ and $\{C_n\}_{n=1}^\infty$ in $X$, the union $\{B_n\cup C_n\}_{n=1}^\infty$ is a uniformly bounded medium end if $B_n\cap C_n\ne\emptyset$ for each $n\ge 1$.
\end{Proposition}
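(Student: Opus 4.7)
The plan is to first verify that $\{B_n\cup C_n\}_{n=1}^\infty$ is a medium end, then to establish uniform boundedness by pivoting every simple end through a single fixed sequence $z_n\in B_n\cap C_n$. The medium-end portion is straightforward: condition~(3) in the definition of a bounded structure makes each $B_n\cup C_n$ bounded (and non-empty), and for any bounded set $A$ the set
$$\{n\in\mathbb{N}\mid (B_n\cup C_n)\cap A\ne\emptyset\}=\{n\mid B_n\cap A\ne\emptyset\}\cup\{n\mid C_n\cap A\ne\emptyset\}$$
is finite as a union of two finite sets.

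For uniform boundedness, choose $z_n\in B_n\cap C_n$; this yields a simple end $\{z_n\}_{n=1}^\infty$, and by transitivity of the equivalence relation it suffices to show that for an arbitrary $x_n\in B_n\cup C_n$ the simple end $\{x_n\}$ is equivalent to $\{z_n\}$. The main obstacle is the index splitting: $x_n$ may lie in $B_n$ for some $n$ and in $C_n$ for others, so uniform boundedness of $\{B_n\}$ or of $\{C_n\}$ alone cannot be applied directly to the whole sequence $\{x_n\}$.

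To bypass this, let $J=\{n\mid x_n\in B_n\}$ and define $u_n=x_n$ for $n\in J$, $u_n=z_n$ for $n\notin J$; symmetrically, $v_n=x_n$ for $n\notin J$, $v_n=z_n$ for $n\in J$. By construction, $u_n\in B_n$ and $v_n\in C_n$ for every $n$, and $\{u_n,v_n\}=\{x_n,z_n\}$ as a set for every $n$. Uniform boundedness of $\{B_n\}$ then gives $\{u_n\}\sim\{z_n\}$, uniform boundedness of $\{C_n\}$ gives $\{v_n\}\sim\{z_n\}$, and transitivity yields $\{u_n\}\sim\{v_n\}$.

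The closing step invokes reflexivity: from $\{u_n\}\sim\{v_n\}$ one obtains that $\{\{u_n,v_n\}\}_{n=1}^\infty$ is itself a uniformly bounded medium end (its boundedness and medium-end properties follow at once from the containments $\{u_n,v_n\}\subset B_n\cup C_n$ and from the first paragraph). Since $x_n$ and $z_n$ both lie in $\{u_n,v_n\}$ for every $n$, uniform boundedness of this medium end produces $\{x_n\}\sim\{z_n\}$, which via transitivity completes the proof for the original pair $x_n,y_n\in B_n\cup C_n$.
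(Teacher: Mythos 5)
Your proof is correct and follows essentially the same route as the paper's: pivot every sequence $x_n\in B_n\cup C_n$ through a sequence $z_n\in B_n\cap C_n$, split the indices to build companion sequences lying wholly in $B_n$ and wholly in $C_n$, invoke reflexivity to make $\{\{u_n,v_n\}\}_{n\ge 1}$ a uniformly bounded medium end, and conclude $\{x_n\}\sim\{z_n\}$ by transitivity. Your only (harmless, indeed slightly cleaner) deviation is fixing one sequence $z_n$ in advance rather than letting it depend on the sequence being tested, which makes the final transitivity step immediate.
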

\begin{proof}
Since $B_n\cap C_n\ne\emptyset$ for each $n\ge 1$, $\{B_n\cup C_n\}_{n=1}^\infty$ is a medium end.
Suppose $t_n\in B_n\cup C_n$ for each $n$. Pick $z_n\in B_n\cap C_n$ for each $n\ge 1$
so that $z_n=t_n$ if $t_n\in B_n\cap C_n$.
Given $n\ge 1$ we define $x_n$ to be
equal to $t_n$ if $t_n\in B_n$, otherwise $x_n=z_n$. Similarly, we define $y_n$ to be
equal to $t_n$ if $t_n\in C_n$, otherwise $y_n=z_n$. Notice $\{x_n\}_{n=1}^\infty$, $\{y_n\}_{n=1}^\infty$, and $\{z_n\}_{n=1}^\infty$ are three equivalent 
simple ends, hence $\{\{x_n,y_n\}\}_{n=1}^\infty$ is a uniformly bounded medium end.
Since $t_n\in \{\{x_n,y_n\}\}$ for each $n$, $\{t_n\}_{n\ge 1}$ is equivalent to $\{z_n\}_{n\ge 1}$.

What that shows is that any simple end $t_n\in B_n\cup C_n$ is equivalent
to one that is contained in $B_n\cap C_n$, hence two simple ends $\{t_n\}_{n\ge 1}$, $\{s_n\}_{n\ge 1}$ with $s_n, t_n\in B_n\cup C_n$
for each $n\ge 1$ are equivalent and $\{B_n\cup C_n\}_{n=1}^\infty$ is a uniformly bounded medium end.
\end{proof}

\begin{Definition}
Suppose $\mathcal{LSS}$ is a large scale structure on a set $X$. The \textbf{induced
simple coarse structure} $\mathbb{SCS}(\mathcal{LSS})$ is defined as follows:\\
1. bounded sets in $\mathbb{SCS}(\mathcal{LSS})$ are the same as bounded sets in $\mathcal{LSS}$,\\
2. two simple ends $\{x_n\}_{n=1}^\infty$ and $\{y_n\}_{n=1}^\infty$ are equivalent if 
and only if $\{\{x_n,y_n\}\}_{n=1}^\infty$ is uniformly bounded in $\mathcal{LSS}$.
\end{Definition}

\begin{Corollary}
Any simple coarse structure $\mathcal{SCS}$ on a set $X$ induced by a large scale structure $\mathcal{LSS}$ is reflexive.
\end{Corollary}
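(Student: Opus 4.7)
The plan is to unwind the two relevant definitions and reduce everything to axiom (1) of Definition~\ref{LSStructureDef}. Let $\{x_n\}_{n=1}^\infty$ and $\{y_n\}_{n=1}^\infty$ be equivalent simple ends in $\mathbb{SCS}(\mathcal{LSS})$, so by construction the family $\mathcal{F}:=\{\{x_n,y_n\}\}_{n=1}^\infty$ is uniformly bounded in $\mathcal{LSS}$. I must show that $\mathcal{F}$ is a uniformly bounded medium end in the sense of Definition~\ref{MediumEndIsUniformlyBoundedDef}.

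First I would verify that $\mathcal{F}$ is a medium end. Each $\{x_n,y_n\}$ is bounded because the one-element subfamily $\{\{x_n,y_n\}\}$ is uniformly bounded by axiom (1) of Definition~\ref{LSStructureDef}, and the bounded sets of $\mathbb{SCS}(\mathcal{LSS})$ agree with those of $\mathcal{LSS}$. For the finiteness requirement, given any bounded $A$,
$$\{n\mid\{x_n,y_n\}\cap A\ne\emptyset\}\subseteq\{n\mid x_n\in A\}\cup\{n\mid y_n\in A\},$$
and the right-hand side is finite because $\{x_n\}$ and $\{y_n\}$ are simple ends.

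Next, given any selection $a_n,b_n\in\{x_n,y_n\}$, the same pigeonhole argument shows that $\{a_n\}$ and $\{b_n\}$ are themselves simple ends. To check that they are equivalent in $\mathbb{SCS}(\mathcal{LSS})$ I must show that $\{\{a_n,b_n\}\}_{n=1}^\infty$ is uniformly bounded in $\mathcal{LSS}$. But each $\{a_n,b_n\}$ is a subset of $\{x_n,y_n\}\in\mathcal{F}$, so axiom (1) of Definition~\ref{LSStructureDef} applies directly: every two-point member sits inside an element of $\mathcal{F}$, and singletons (from the case $a_n=b_n$) are unrestricted by that axiom. There is no real obstacle here---the only subtlety is keeping straight the two layers of definitions, both of which collapse to the closure-under-subsets property built into axiom (1) of a large scale structure.
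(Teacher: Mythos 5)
Your proposal is correct and is exactly the argument the paper intends: the Corollary is stated without proof as an immediate consequence of the definitions, and your unwinding---equivalence in $\mathbb{SCS}(\mathcal{LSS})$ means $\{\{x_n,y_n\}\}_{n\ge 1}$ is uniformly bounded in $\mathcal{LSS}$, selections from it are simple ends, and closure under subsets (axiom (1) of Definition~\ref{LSStructureDef}) makes every selected pair family uniformly bounded, hence the selected ends equivalent---is precisely that intended reasoning. No discrepancy with the paper's (implicit) proof.
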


\begin{Proposition}
Suppose $\mathcal{SCS}$ is a reflexive simple coarse structure on a set $X$. The familiy of all covers $\mathcal{U}$ of $X$ satisfying conditions 1) and 2) below forms a large scale structure on $X$
whose bounded sets are the same as $\mathcal{SCS}$:\\
1. $st(B,\mathcal{U})$ is bounded for each bounded set $B$,\\
2. Any medium end consisting of elements of $\mathcal{U}$ is uniformly bounded in $\mathcal{SCS}$.
\end{Proposition}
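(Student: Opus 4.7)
The plan is to verify non-emptiness of the proposed family $\mathcal{LSS}$ and the two axioms of Definition \ref{LSStructureDef}. Since the construction is performed on $(X,\mathcal{B})$ and both conditions (1) and (2) refer to this ambient bounded structure, the bounded sets of the resulting LSS are those of $\mathcal{B}$ and so agree with $\mathcal{SCS}$ by construction. Non-emptiness is immediate from the singleton cover $\{\{x\}:x\in X\}$: its star of any bounded set is that set itself, and every medium end in it consists of singletons, so $x_n=y_n$ for any choice and uniform boundedness is trivial.

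For the coarsening axiom, suppose $\mathcal{U}_1\in\mathcal{LSS}$ and $\mathcal{U}_2$ is a cover of $X$ whose multi-point elements each lie in some element of $\mathcal{U}_1$. An element of $\mathcal{U}_2$ meeting a bounded $B$ is either a singleton inside $B$ or is contained in an element of $\mathcal{U}_1$ meeting $B$, so $st(B,\mathcal{U}_2)\subseteq st(B,\mathcal{U}_1)$ is bounded. For condition (2), given a medium end $\{V_n\}$ in $\mathcal{U}_2$, I would choose $U_n\in\mathcal{U}_1$ with $V_n\subseteq U_n$ (possible even for singleton $V_n$ because $\mathcal{U}_1$ covers). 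The key observation is that $\{U_n\}$ is itself a medium end in $\mathcal{U}_1$: if some bounded $A$ met infinitely many $U_n$, each such $U_n$ would sit inside the bounded set $st(A,\mathcal{U}_1)$, forcing $V_n\subseteq st(A,\mathcal{U}_1)$ and contradicting the medium-end property of $\{V_n\}$. Condition (2) for $\mathcal{U}_1$ then delivers equivalence of $\{x_n\}$ and $\{y_n\}$ for any $x_n,y_n\in V_n\subseteq U_n$.

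For the star axiom, $st(\mathcal{U}_1,\mathcal{U}_2)$ is a cover, and iterating (1) gives $st(B,st(\mathcal{U}_1,\mathcal{U}_2))\subseteq st(st(st(B,\mathcal{U}_2),\mathcal{U}_1),\mathcal{U}_2)$, bounded whenever $B$ is. For condition (2), take a medium end $\{W_n=st(U_n,\mathcal{U}_2)\}$ and choices $x_n,y_n\in W_n$; select $V_n,V_n'\in\mathcal{U}_2$ meeting $U_n$ with $x_n\in V_n$, $y_n\in V_n'$, and pick $a_n\in V_n\cap U_n$, $b_n\in V_n'\cap U_n$. Since $U_n,V_n,V_n'$ all sit inside $W_n$, each of $\{U_n\},\{V_n\},\{V_n'\}$ is a medium end in its respective cover; applying (2) for $\mathcal{U}_2$ to $\{V_n\}$ and $\{V_n'\}$ yields $\{x_n\}\sim\{a_n\}$ and $\{y_n\}\sim\{b_n\}$, and (2) for $\mathcal{U}_1$ applied to $\{U_n\}$ yields $\{a_n\}\sim\{b_n\}$; transitivity of the equivalence relation then gives $\{x_n\}\sim\{y_n\}$.

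The main obstacle is the medium-end propagation in the coarsening axiom, where condition (1) for $\mathcal{U}_1$ is the pivot that upgrades the medium-end property from the $V_n$ to the possibly larger $U_n$. Once this is secured, the rest of the verification is careful but routine bookkeeping over covers, stars, and the equivalence relation, with reflexivity of $\mathcal{SCS}$ entering only in the background role of making condition (2) a meaningful constraint on medium ends.
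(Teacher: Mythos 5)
Your proof is correct, and for the substantive step (closure under stars) it follows the same skeleton as the paper: given a medium end $\{st(U_n,\mathcal{U}_2)\}_{n\ge 1}$ and points $x_n,y_n$ in its members, both arguments pass through the triple $V_n, U_n, V_n'$ of sets meeting pairwise. The difference is in how equivalence of $\{x_n\}$ and $\{y_n\}$ is extracted: the paper applies Proposition \ref{UnionOfUniformlyBoundedMediumEnds} twice to conclude that $\{V_n\cup U_n\cup V_n'\}_{n\ge 1}$ is a uniformly bounded medium end, and that proposition is exactly where reflexivity of $\mathcal{SCS}$ is used; you instead pick witnesses $a_n\in V_n\cap U_n$, $b_n\in V_n'\cap U_n$ and chain $\{x_n\}\sim\{a_n\}\sim\{b_n\}\sim\{y_n\}$ by transitivity, which needs only that condition 2) holds for $\mathcal{U}_1$ and $\mathcal{U}_2$ separately. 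A byproduct of your route is that the star-closure step does not actually require the reflexivity hypothesis at all (it enters the paper only through \ref{UnionOfUniformlyBoundedMediumEnds}), so your argument is slightly more elementary and marginally more general. You also verify explicitly what the paper compresses into ``it suffices to show'': non-emptiness via the singleton cover, the refinement axiom of Definition \ref{LSStructureDef} (where your observation that condition 1) for $\mathcal{U}_1$ upgrades the medium-end property from $\{V_n\}$ to the enveloping $\{U_n\}$ is the right pivot and is genuinely needed), and the inclusion $st(B,st(\mathcal{U}_1,\mathcal{U}_2))\subset st(st(st(B,\mathcal{U}_2),\mathcal{U}_1),\mathcal{U}_2)$, which is the same inclusion the paper uses. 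Your treatment of the claim about bounded sets is brief and essentially definitional, but the paper's proof does not address it either, so this is not a gap relative to the source.
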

\begin{proof}
It suffices to show that, given two covers $\mathcal{U}$ and $\mathcal{V}$ of $X$ satisfying conditions 1) and 2), the cover $st(\mathcal{U},\mathcal{V}):=\{st(U,\mathcal{V}) \mid U\in \mathcal{U}\}$ satisfies conditions 1) and 2).

Since $st(B,st(\mathcal{U},\mathcal{V}))\subset st(st(st(B,\mathcal{V}),\mathcal{U}),\mathcal{V})$, condition 1) holds.

Suppose $\{st(U_n,\mathcal{V})\}_{n\ge 1}$, $U_n\in \mathcal{U}$ is a medium end.
Given $x_n, y_n\in st(U_n,\mathcal{V})$ pick $V_n\in \mathcal{V}$ containing $x_n$ and $W_n\in \mathcal{V}$ containing $y_n$. Applying \ref{UnionOfUniformlyBoundedMediumEnds} twice we conclude $\{V_n\cup U_n\cup W_n\}_{n\ge 1}$ is a uniformly bounded medium end.
Consequently, $\{x_n\}_{n=1}^\infty$ and $\{y_n\}_{n=1}^\infty$ are equivalent simple ends
and $\{st(U_n,\mathcal{V})\}_{n\ge 1}$ is a uniformly bounded medium end.
\end{proof}

\begin{Definition}
Suppose $\mathcal{SCS}$ is a reflexive simple coarse structure on a set $X$. The \textbf{induced
large scale structure} $\mathbb{LSS}(\mathcal{SCS})$ consists of covers $\mathcal{U}$ of $X$ satisfying the following conditions:\\
1. $st(B,\mathcal{U})$ is bounded for each bounded set $B$,\\
2. Any medium end consisting of elements of $\mathcal{U}$ is uniformly bounded.
\end{Definition}

\begin{Proposition}
Suppose $(X,d)$ is a metric space.
The coarse structure induced by the simple coarse structure $\mathcal{SCS}(C_0)_d$ (see \ref{C0SCS}) is the $C_0$ coarse structure of N.Wright \cite{Wright}.
\end{Proposition}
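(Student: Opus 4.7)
The plan is to unpack both sides into conditions on covers and match them via the standard correspondence between Roe coarse structures and large scale structures (a uniformly bounded cover $\mathcal{U}$ corresponds to the controlled entourage $\bigcup_{U\in\mathcal{U}}U\times U$). Recall that Wright's $C_0$ coarse structure consists of entourages $E\subset X\times X$ with $\sup_{(x,y)\in E}d(x,y)<\infty$ such that for every $\epsilon>0$ there is a finite-diameter set $K$ with $d(x,y)<\epsilon$ whenever $(x,y)\in E$ and $x\notin K$. On the other side, unwinding Definitions \ref{MediumEndIsUniformlyBoundedDef} and \ref{C0SCS} shows that a medium end $\{B_n\}$ is uniformly bounded in $\mathcal{SCS}(C_0)_d$ iff $\diam(B_n)\to 0$; hence a cover $\mathcal{U}$ of $X$ lies in $\mathbb{LSS}(\mathcal{SCS}(C_0)_d)$ iff (i) $st(B,\mathcal{U})$ has finite diameter for every finite-diameter $B$, and (ii) whenever $\{U_n\}\subset\mathcal{U}$ is a medium end, $\diam(U_n)\to 0$.

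The crux is the following translation lemma, which I establish first: a cover $\mathcal{U}$ of $X$ satisfies (i) and (ii) if and only if for every $\epsilon>0$ there exists a finite-diameter set $K_\epsilon$ such that $\diam(U)<\epsilon$ for every $U\in\mathcal{U}$ with $U\not\subset K_\epsilon$. The ``$\Leftarrow$'' direction is routine: taking $\epsilon=1$ uniformly bounds $\diam(U)\le\max\{1,\diam(K_1)\}=:D_0$, so $st(B,\mathcal{U})$ is contained in the $D_0$-neighborhood of $B$ and has finite diameter, giving (i); while any medium end $\{U_n\}$ must eventually satisfy $U_n\not\subset K_\epsilon$, giving (ii). For ``$\Rightarrow$'' I argue by contradiction: if the property fails at some $\epsilon_0>0$, fix a basepoint $p$ and choose $U_n\in\mathcal{U}$ with $\diam(U_n)\ge\epsilon_0$ and $U_n\not\subset\bar B(p,n)$. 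For any finite-diameter $A$, condition (i) bounds the diameter of any member of $\mathcal{U}$ meeting $A$ by $M_A:=\diam(st(A,\mathcal{U}))$; combined with the existence of a point of $U_n$ at distance greater than $n$ from $p$, this forces $n\le M_A+\sup_{a\in A}d(p,a)$, so only finitely many $U_n$ meet $A$, i.e.\ $\{U_n\}$ is a medium end. Then (ii) gives $\diam(U_n)\to 0$, contradicting $\diam(U_n)\ge\epsilon_0$.

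With the lemma in hand, the proposition becomes a direct comparison. For the forward inclusion, take $E$ Wright-$C_0$-controlled; enlarging $E$ to contain $\Delta$, set $\mathcal{U}_E=\{E[x]:x\in X\}$. The sup bound shows $st(B,\mathcal{U}_E)$ lies within the $2\sup E$-neighborhood of $B$, hence (i); since $x\in E[x]$, any medium end $\{E[x_n]\}$ forces $x_n$ to escape every bounded set, so the $C_0$ condition confines $E[x_n]\subset\bar B(x_n,\epsilon)$ for all large $n$, yielding $\diam(E[x_n])\to 0$, i.e.\ (ii); also $E\subset\bigcup_xE[x]\times E[x]$. Conversely, for a uniformly bounded cover $\mathcal{U}$ the translation lemma applied to $E_\mathcal{U}=\bigcup_{U\in\mathcal{U}}U\times U$ directly yields both the sup bound and the $C_0$ condition, since $(x,y)\in U\times U$ with $x\notin K_\epsilon$ forces $U\not\subset K_\epsilon$ and hence $d(x,y)<\epsilon$. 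The main obstacle is the ``$\Rightarrow$'' direction of the translation lemma, where a sequence of bad members $U_n$ must be promoted to an honest medium end by exploiting condition (i) to rule out the $U_n$ clustering near any fixed bounded set.
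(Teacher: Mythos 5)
Your proposal is correct and follows essentially the same route as the paper: both translate Wright's $C_0$ structure into its description by uniformly bounded covers that become small at infinity and compare it with $\mathbb{LSS}(\mathcal{SCS}(C_0)_d)$ by a contradiction argument with escaping sequences. Your version is somewhat more careful than the paper's terse proof, notably in using the star condition to promote the bad sets $U_n$ to a genuine medium end (a step the paper asserts without justification) and in spelling out the cover--entourage bookkeeping.
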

\begin{proof}
In the language of large scale structures, a family $\mathcal{U}$ of $d$-bounded subsets of $X$ is uniformly bounded in $C_0$ if and only if, for each $\epsilon > 0$, there is a $d$-bounded subset $B$ with the property that any $U\in \mathcal{U}$ intersecting
$X\setminus B$, $U\setminus B$ is of diameter at most $\epsilon$. Therefore, given two simple ends
$\{x_n\}_{n=1}^\infty$ and $\{y_n\}_{n=1}^\infty$ such that for some elements
$U_n$ of  $\mathcal{U}$ one has $x_n, y_n\in U_n$, one can easily see $\lim\limits_{n\to\infty} d(x_n,y_n)=0$.

Conversely, suppose $\mathcal{U}$ belongs to the large scale structure
induced by $\mathcal{SCS}(C_0)_d$ but there is $\epsilon > 0$ such that for 
each $n$-ball $B(x_0,n)$ there is $U_n\in \mathcal{U}$ with the diameter of $U_n\setminus B(x_0,n)$ being larger than $\epsilon$. Therefore, one can construct two simple ends
$\{x_n\}_{n=1}^\infty$ and $\{y_n\}_{n=1}^\infty$ such that $x_n, y_n\in U_n$ yet $\lim\limits_{n\to\infty} d(x_n,y_n)\ne 0$, a contradiction. 
\end{proof}

\begin{Definition}\label{ReflexiveLSS}
A large scale structure $\mathcal{LSS}$ is \textbf{reflexive} if $\mathbb{LSS}(\mathbb{SCS}(\mathcal{LSS}))=\mathcal{LSS}$.
\end{Definition}

\begin{Corollary}
Any large scale structure $\mathcal{LSS}$ on a set $X$ induced by a reflexive simple coarse structure $\mathcal{SCS}$ is reflexive.
\end{Corollary}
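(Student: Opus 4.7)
The plan is to first show the auxiliary identity $\mathbb{SCS}(\mathcal{LSS}) = \mathcal{SCS}$; applying $\mathbb{LSS}$ to both sides and using the hypothesis $\mathcal{LSS} = \mathbb{LSS}(\mathcal{SCS})$ then yields $\mathbb{LSS}(\mathbb{SCS}(\mathcal{LSS})) = \mathbb{LSS}(\mathcal{SCS}) = \mathcal{LSS}$, which is precisely the desired reflexivity. Set $\mathcal{SCS}' := \mathbb{SCS}(\mathcal{LSS})$; the bounded sets of $\mathcal{SCS}'$ and $\mathcal{SCS}$ agree by construction, so the task reduces to comparing the two equivalence relations on simple ends.

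For the direction $\mathcal{SCS} \Rightarrow \mathcal{SCS}'$, suppose $\{x_n\} \sim_{\mathcal{SCS}} \{y_n\}$ and set $\mathcal{V} := \{\{x_n, y_n\}\}_{n \ge 1}$. I need to verify $\mathcal{V} \in \mathcal{LSS} = \mathbb{LSS}(\mathcal{SCS})$, that is, the two defining conditions of an induced LSS. Condition (1), that $st(B, \mathcal{V})$ is bounded for each bounded $B$, is immediate because only finitely many pairs $\{x_n, y_n\}$ meet $B$ (both $\{x_n\}$ and $\{y_n\}$ are simple ends), and those finitely many pairs together with $B$ assemble into a single bounded set by repeated application of axiom~3 of a bounded structure. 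Condition (2), that every medium end drawn from $\mathcal{V}$ is uniformly bounded in $\mathcal{SCS}$, is exactly what reflexivity of $\mathcal{SCS}$ gives for $\mathcal{V}$ itself; to push this from $\mathcal{V}$ to a sub-medium-end $\{\{x_{n_k}, y_{n_k}\}\}_k$, one lifts any pair of selections $a_k, b_k \in \{x_{n_k}, y_{n_k}\}$ to selections $\{a_n'\}, \{b_n'\}$ on the whole sequence $\mathcal{V}$ by filling in the omitted indices with $x_n$ or $y_n$ arbitrarily, then invokes the reflexivity of $\mathcal{SCS}$ to conclude $\{a_n'\} \sim_{\mathcal{SCS}} \{b_n'\}$.

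For the reverse direction $\mathcal{SCS}' \Rightarrow \mathcal{SCS}$, assume $\mathcal{V} \in \mathcal{LSS}$. Since $\{x_n\}$ and $\{y_n\}$ are simple ends, the sequence $\mathcal{V}$ (indexed by $n$) is already itself a medium end drawn from $\mathcal{V}$. Condition (2) of $\mathcal{LSS}$ then forces $\mathcal{V}$ to be uniformly bounded in $\mathcal{SCS}$, and the specific selection $(x_n, y_n)$ with $x_n, y_n \in \{x_n, y_n\}$ yields $\{x_n\} \sim_{\mathcal{SCS}} \{y_n\}$. Combining both inclusions gives $\mathcal{SCS}' = \mathcal{SCS}$, from which the theorem follows.

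The main obstacle lies in the sub-medium-end step in Condition (2) of the first inclusion: showing that uniform boundedness of the full sequence $\mathcal{V}$ descends to sub-medium-ends $\{\{x_{n_k}, y_{n_k}\}\}_k$. This is not a purely formal consequence of the axioms, but is handled by the selection-extension trick above, which is the same device implicitly used in Proposition~\ref{UnionOfUniformlyBoundedMediumEnds} and in verifying that $\mathbb{LSS}(\mathcal{SCS})$ is a large scale structure. All remaining bookkeeping is a direct unpacking of the definitions of $\mathbb{SCS}$ and $\mathbb{LSS}$.
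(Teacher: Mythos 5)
Your reduction stands or falls with the intermediate identity $\mathbb{SCS}(\mathcal{LSS})=\mathcal{SCS}$, and the step you yourself flag as the main obstacle is exactly where it breaks. The selection--extension trick only shows that the two \emph{extended} full-length selections $\{a_n'\}$ and $\{b_n'\}$ are $\mathcal{SCS}$-equivalent; to get what condition 2) actually demands you must pass back to the sub-selections $\{a_k\}=\{a'_{n_k}\}$ and $\{b_k\}=\{b'_{n_k}\}$, and a simple coarse structure is an \emph{arbitrary} equivalence relation on simple ends: nothing in the axioms, and nothing in reflexivity, makes equivalence stable under passing to subsequences (or reorderings, or mixing in singleton elements of the cover, which condition 2) also allows). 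In fact the identity is false in general. Take $X=\NN$ with finite sets bounded, and let the only nontrivial equivalence class $\mathcal{C}$ consist of the simple ends $\{x_n\}$ with $x_n=n$ for all but finitely many $n$, every other end being equivalent only to itself. This $\mathcal{SCS}$ is reflexive (any selection from the pairs of two members of $\mathcal{C}$ again lies in $\mathcal{C}$). Now let $x=(1,2,3,4,\dots)$ and $y$ agree with $x$ except $y_2=4$; then $x\sim y$, but $\{\{x_n,y_n\}\}_{n\ge 1}$ is not uniformly bounded in $\mathbb{LSS}(\mathcal{SCS})$: any cover $\mathcal{U}$ satisfying 1) and containing an element $U\supset\{2,4\}$ admits the medium end $U, C_2, C_3,\dots$ with $2k\in C_k\in\mathcal{U}$, whose selections $(2,4,6,8,\dots)$ and $(4,4,6,8,\dots)$ are inequivalent, so condition 2) fails. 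Hence $\mathbb{SCS}(\mathbb{LSS}(\mathcal{SCS}))\neq\mathcal{SCS}$ here, and applying $\mathbb{LSS}$ to both sides of your identity is not available. (A smaller point in the same direction: your condition 1) argument needs each pair $\{x_n,y_n\}$ to be bounded before axiom 3 can be applied; that comes from reflexivity asserting the pair family is a medium end, which you should say.)

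The corollary itself is true (the paper states it without proof), but the correct route avoids the identity: show directly that a cover satisfies conditions 1), 2) with respect to $\mathcal{SCS}$ if and only if it does with respect to $\mathcal{SCS}':=\mathbb{SCS}(\mathcal{LSS})$. For $\mathbb{LSS}(\mathcal{SCS}')\subset\mathcal{LSS}$ one only needs the inclusion $\sim'\subset\sim$, which is essentially your reverse direction once done carefully: if $\{\{x_n,y_n\}\}$ is uniformly bounded in $\mathcal{LSS}$, choose a witnessing cover $\mathcal{U}$ and elements $U_n\supset\{x_n,y_n\}$, check that $\{U_n\}$ is a medium end (if $U_n$ meets a bounded $A$ then $x_n\in \st(A,\mathcal{U})$, which is bounded), and apply condition 2). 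For the converse inclusion $\mathcal{LSS}\subset\mathbb{LSS}(\mathcal{SCS}')$, given $\mathcal{U}\in\mathcal{LSS}$, a medium end $\{U_n\}$ in $\mathcal{U}$ and selections $a_n,b_n\in U_n$, the family $\{\{a_n,b_n\}\}_{n\ge1}$ has every multi-point element contained in $U_n\in\mathcal{U}$, so it is uniformly bounded in $\mathcal{LSS}$ by the refinement axiom of a large scale structure (or by re-running the star argument for the cover it generates); hence $\{a_n\}\sim'\{b_n\}$. No subsequence stability of $\sim$ is ever needed, which is precisely why the corollary survives even though $\mathbb{SCS}(\mathcal{LSS})$ may be strictly finer than $\mathcal{SCS}$.
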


\begin{Theorem}
Any large scale structure $\mathcal{LSS}$ on a countable set $X$ whose bounded sets are exactly finite subsets of $X$ is reflexive.
\end{Theorem}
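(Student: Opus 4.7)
The plan is to prove the equality $\mathbb{LSS}(\mathbb{SCS}(\mathcal{LSS}))=\mathcal{LSS}$ by establishing both inclusions. The inclusion $\mathcal{LSS}\subseteq \mathbb{LSS}(\mathbb{SCS}(\mathcal{LSS}))$ is formal and uses neither countability of $X$ nor the characterization of bounded sets. Given $\mathcal{U}\in\mathcal{LSS}$, condition 1) of the induced large scale structure holds because for a bounded $B$ the family $\{B\}$ lies in $\mathcal{LSS}$, so axiom (2) yields $st(\{B\},\mathcal{U})=\{st(B,\mathcal{U})\}\in\mathcal{LSS}$; thus $st(B,\mathcal{U})$ is bounded. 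Condition 2) follows from axiom (1) for $\mathcal{LSS}$: for any medium end $\{B_n\}\subseteq \mathcal{U}$ and any selections $x_n,y_n\in B_n$, the pair $\{x_n,y_n\}$ sits inside $B_n\in\mathcal{U}$, so $\{\{x_n,y_n\}\}_n\in \mathcal{LSS}$, which is exactly the $\mathbb{SCS}(\mathcal{LSS})$-equivalence of $\{x_n\}$ and $\{y_n\}$.

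For the reverse inclusion, fix $\mathcal{U}\in\mathbb{LSS}(\mathbb{SCS}(\mathcal{LSS}))$ and aim to conclude $\mathcal{U}\in\mathcal{LSS}$. The first observation combines both hypotheses: by condition 1), $st(\{x\},\mathcal{U})$ is bounded, hence finite, for each $x\in X$, so only finitely many elements of $\mathcal{U}$ contain any given point. Since $X$ is countable and each nonempty $V\in\mathcal{U}$ contains some $x$, the family $\mathcal{U}$ is countable. If $\mathcal{U}$ is finite, then $\bigcup_{V\in\mathcal{U}}V$ is finite and axiom (1) immediately gives $\mathcal{U}\in\mathcal{LSS}$, so I enumerate $\mathcal{U}=\{V_n\}_{n\ge 1}$ in the infinite case.

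The crux is to place the family $\mathcal{P}$ of all $2$-element subsets $\{x,y\}\subseteq V_n$ (over all $n$) into $\mathcal{LSS}$. Since each $V_n$ is finite and $\mathcal{U}$ is countable, $\mathcal{P}$ is countable; enumerate $\mathcal{P}=\{P_n\}_{n\ge 1}$ and, by choice, pick for each $n$ an index $k_n$ with $P_n\subseteq V_{k_n}$. I claim $\{V_{k_n}\}_n$ is a medium end of $\mathcal{U}$: for any finite $F\subseteq X$ only finitely many $V_j\in\mathcal{U}$ meet $F$ (by local finiteness), and for each such $V_j$ the set $\{n:V_{k_n}=V_j\}\subseteq\{n:P_n\subseteq V_j\}$ is bounded by the number of $2$-subsets of $V_j$, hence finite. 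Condition 2), applied with $x_n,y_n$ taken to be the two elements of $P_n$, then delivers $\{\{x_n,y_n\}\}_n=\{P_n\}_n=\mathcal{P}\in\mathcal{LSS}$.

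Finally, the passage from $\mathcal{P}$ to $\mathcal{U}$ goes via a star. The family $\mathcal{B}_0=\{\{x\}\}_{x\in X}$ lies in $\mathcal{LSS}$ because singletons are bounded, so axiom (2) gives $st(\mathcal{B}_0,\mathcal{P})\in\mathcal{LSS}$. A direct check shows $st(\{x\},\mathcal{P})=\bigcup\{V\in\mathcal{U}:x\in V\}=st(\{x\},\mathcal{U})$, and for any $V_n\in\mathcal{U}$ and any $x\in V_n$ one has $V_n\subseteq st(\{x\},\mathcal{U})$, so axiom (1) delivers $\mathcal{U}\in\mathcal{LSS}$. The main obstacle throughout is manufacturing $\{V_{k_n}\}$ as a genuine medium end of $\mathcal{U}$; this is exactly where the countability of $X$ and the local finiteness from condition 1) intervene, and the argument would fail for uncountable $X$ because then $\mathcal{U}$ and $\mathcal{P}$ need not even admit an enumeration.
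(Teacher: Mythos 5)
Your argument is correct in substance and rests on the same mechanism as the paper's proof -- convert condition 2) into membership in $\mathcal{LSS}$ of a family of two-point sets, then recover the elements of $\mathcal{U}$ by starring singletons against that family and invoking axiom (1) of a large scale structure -- but the combinatorial bookkeeping is genuinely different. The paper fixes a medium end $\{U_n\}_{n\ge 1}$ from $\mathcal{U}$ and blows it up, repeating $U_k$ exactly $|U_k|$ times, so that a single pair of equivalent simple ends (the repeated base points $a_k$ versus an enumeration of the points of $U_k$) yields all anchored pairs $\{a_k,y\}$, $y\in U_k$; it must then argue separately that the deduplicated cover itself enumerates as a medium end. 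You instead enumerate globally all two-element subsets of elements of $\mathcal{U}$, realize them as one pair of equivalent simple ends supported on the medium end $\{V_{k_n}\}_{n\ge 1}$, and swallow every multi-point element of $\mathcal{U}$ with a single star $st(\mathcal{B}_0,\mathcal{P})$. This collapses the paper's two-stage argument into one construction, and you also spell out the easy inclusion $\mathcal{LSS}\subseteq\mathbb{LSS}(\mathbb{SCS}(\mathcal{LSS}))$, which the paper leaves implicit; what the paper's version buys is the intermediate statement, of independent interest, that every medium end consisting of elements of $\mathcal{U}$ is uniformly bounded as a family in $\mathcal{LSS}$.

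Two small repairs are needed. First, you dispose of the case where $\mathcal{U}$ is finite, but your construction actually requires $\mathcal{P}$ to be infinite: if $\mathcal{U}$ is infinite while only finitely many of its elements have more than one point (say, all singletons plus one pair), then $\mathcal{P}$ is finite and no enumeration of it produces a medium end (repeating a term destroys the medium-end property), so the argument as written stalls. The fix is one line: in that case the union of the multi-point elements of $\mathcal{U}$ is finite, hence bounded, and axiom (1) gives $\mathcal{U}\in\mathcal{LSS}$ directly. Second, the asserted identity $st(\{x\},\mathcal{P})=st(\{x\},\mathcal{U})$ fails at points lying only in singleton elements of $\mathcal{U}$ (the left-hand side is then empty); this is harmless, because axiom (1) only asks that elements of $\mathcal{U}$ with more than one point be contained in elements of $st(\mathcal{B}_0,\mathcal{P})$, and for those your containment $V_n\subseteq st(\{x\},\mathcal{P})$ with $x\in V_n$ does hold -- but phrase it as that containment rather than as an equality of stars.
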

\begin{proof}
Let $\mathcal{SCS}$ be the simple coarse structure induced by $\mathcal{LSS}$
and suppose $\mathcal{U}$ is a cover of $X$ consisting of finite subsets of $X$
such that any medium end $\{U_n\}_{n\ge 1}$, $U_n\in \mathcal{U}$ for each $n\ge 1$,
is uniformly bounded in $\mathcal{SCS}$.
Choose $a_k\in U_k$ for each $k\ge 1$ and let $\{x_n\}_{n\ge 1}$ be the sequence of points
obtained as follows: $a_1$ is repeated $|U_1|$-times, $a_2$ is repeated $|U_2|$-times, and so on.
Let $\{y_n\}_{n\ge 1}$ be the sequence of points
obtained as follows: we list all points of $U_1$, then we list all points of $U_2$, and so on.
In the same way we create a medium end $\{V_n\}_{n\ge 1}$ in $\mathcal{U}$: $U_1$ is repeated $|U_1|$-times, $U_2$ is repeated $|U_2|$-times, and so on.
Since $\{V_n\}_{n\ge 1}$ is a uniformly bounded medium end
and $x_n, y_n\in V_n$ for each $n\ge 1$, $\{x_n\}_{n\ge 1}$ and $\{y_n\}_{n\ge 1}$ are equivalent
in $\mathcal{SCS}$, hence $\mathcal{W}:=\{\{x_n, y_n\}\}_{n\ge 1}$  is uniformly bounded in $\mathcal{LSS}$.
Therefore $\{st(x_n, \mathcal{W})\}_{n\ge 1}$ is uniformly bounded in $\mathcal{LSS}$.
Since $U_n\subset st(x_n, \mathcal{W})$ for each $n\ge 1$,
$\{U_n\}_{n\ge 1}$ is uniformly bounded in $\mathcal{LSS}$.

Given a uniformly bounded cover $\mathcal{V}$ in $\mathbb{LSS}(\mathcal{SCS})$ we remove multiple copies of the same bounded set and we enumerate remaining elements as $\{V_n\}_{n\ge 1}$. Notice each point $x$ of $X$ belongs to finitely many elements of $\{V_n\}_{n\ge 1}$ as otherwise we would detect multiple copies of the same set in $\{V_n\}_{n\ge 1}$.
Therefore $\{V_n\}_{n\ge 1}$ is a medium end. Since we just proved that $\{V_n\}_{n\ge 1}$ is uniformly bounded in $\mathcal{LSS}$, $\mathcal{V}$ is uniformly bounded in $\mathcal{LSS}$.

\end{proof}

\begin{Example}
Consider an uncountable group $G$ and let $\mathcal{LSS}$ be the large scale structure on $G$ consisting of covers that refine covers $\mathcal{U}$ of the following form: there is a finite subset $F$ of $G$ containing $1_G$ and a countable subset $A$ of $G$ so that $\mathcal{U}=\{x\cdot F\}_{x\in A}\cup \{\{g\}\}_{g\in G\setminus A}$. $\mathcal{LSS}$ is not reflexive.
\end{Example}
\begin{proof}
$\mathbb{SCS}(\mathcal{LSS})$ has the same equivalency of ends as the standard left-invariant structure $G_l$ on $G$ (see \ref{GroupSCS}). However, $\mathbb{LSS}(G_l)$
is a bigger structure than $\mathcal{LSS}$. Namely, it is generated by covers of $G$
of the form $\{x\cdot F\}_{x\in G}$, where $F$ is a finite subset of $G$.
\end{proof}

\begin{Question}
Suppose $\mathcal{LSS}$ is a large scale structure on a set $X$ whose bounded structure has a countable basis. Is $\mathcal{LSS}$ reflexive?
\end{Question}

\section{Slowly oscillating functions and Higson corona}

In this section we generalize the concept of slowly oscillating functions from coarse structures to simple coarse structures.

\begin{Definition}
Suppose $\mathcal{LSS}$ is a large scale structure on a set $X$ so that all finite subset of $X$ are bounded.
A function $f:X\to Y$ from $X$ to a metric space $Y$ is \textbf{slowly oscillating} if for any uniformly bounded family $\mathcal{U}$ and any $\epsilon > 0$ there is a bounded
subset $B$ of $X$ such that for any $U\in \mathcal{U}$ missing $B$ one has
$\diam(f(U)) < \epsilon$.
\end{Definition}

\begin{Definition}
Suppose $\mathcal{SCS}$ is a simple coarse structure on a set $X$.
A function $f:X\to Y$ from $X$ to a metric space $Y$ is \textbf{slowly oscillating} if for any
two simple ends $\{x_n\}_{n=1}^\infty$ and $\{y_n\}_{n=1}^\infty$ that are equivalent
one has $d_Y(f(x_n),f(y_n))\to 0$ as $n\to \infty$.
\end{Definition}

\begin{Proposition}
Suppose $\mathcal{LSS}$ is a large scale structure on a set $X$ so that all finite subset of $X$ are bounded.
If a function $f:X\to Y$ from $X$ to a metric space $Y$ is slowly oscillating, then it is slowly oscillating from the point of view of the induced simple scale structure on $X$.
\end{Proposition}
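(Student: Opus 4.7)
The plan is to unpack the definition of the induced simple coarse structure $\mathbb{SCS}(\mathcal{LSS})$ and feed the resulting uniformly bounded family directly into the slow-oscillation hypothesis on $f$. Since two simple ends $\{x_n\}$ and $\{y_n\}$ are equivalent in $\mathbb{SCS}(\mathcal{LSS})$ precisely when the family $\mathcal{U} := \{\{x_n,y_n\}\}_{n=1}^\infty$ is uniformly bounded in $\mathcal{LSS}$, this family is the one object that bridges the two notions of slow oscillation.

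First, fix equivalent simple ends $\{x_n\}$ and $\{y_n\}$ in the induced simple coarse structure and form the uniformly bounded family $\mathcal{U}$ as above. Given $\epsilon > 0$, apply the $\mathcal{LSS}$-slow oscillation of $f$ to $\mathcal{U}$ and $\epsilon$ to obtain a bounded subset $B \subset X$ such that every $U \in \mathcal{U}$ with $U \cap B = \emptyset$ satisfies $\diam(f(U)) < \epsilon$. In particular, whenever $\{x_n,y_n\} \cap B = \emptyset$ we have $d_Y(f(x_n),f(y_n)) < \epsilon$.

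Second, invoke the definition of a simple end. Since $B$ is bounded and both $\{x_n\}$ and $\{y_n\}$ are simple ends in $(X,\mathcal{B})$, each of the sets $\{n \mid x_n \in B\}$ and $\{n \mid y_n \in B\}$ is finite, so for all but finitely many $n$ the pair $\{x_n,y_n\}$ misses $B$. Consequently $d_Y(f(x_n),f(y_n)) < \epsilon$ for all but finitely many $n$, and since $\epsilon > 0$ was arbitrary, $d_Y(f(x_n),f(y_n)) \to 0$. This is exactly the slow-oscillation condition for the induced simple coarse structure.

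There is no real obstacle here beyond correctly handling the translation between the two definitions; the proof is a short diagram-chase through the definitions, with the only mild point being the need to assume that all finite subsets of $X$ are bounded, which guarantees that the finite exceptional sets of indices produced by the simple-end condition actually lie in a bounded subset of the form $\{x_n,y_n\mid n\in F\}$ if one wanted a symmetric formulation; here the argument uses only that $B$ from the $\mathcal{LSS}$ definition meets $\{x_n\}$ and $\{y_n\}$ in finitely many indices, which follows directly from the simple-end property.
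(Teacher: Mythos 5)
Your proof is correct and follows essentially the same route as the paper: form the uniformly bounded family $\{\{x_n,y_n\}\}_{n\ge 1}$ from the equivalence of the simple ends, apply the $\mathcal{LSS}$-slow-oscillation condition to obtain a bounded set $B$, and use the simple-end property to conclude that $\{x_n,y_n\}$ misses $B$ for all but finitely many $n$, giving $d_Y(f(x_n),f(y_n))\to 0$. The closing remark about the role of the finiteness-of-bounded-sets hypothesis is inessential and does not affect the argument.
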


\begin{proof}
Suppose $f$ is slowly oscillating with respect to $\mathcal{LSS}$ and $\{x_n\}_{n=1}^\infty$ and $\{y_n\}_{n=1}^\infty$ are two simple ends that are equivalent. Therefore, $\mathcal{U}:=\{\{x_n,y_n\}\}_{n\ge 1}$ is a uniformly bounded family and for any $\epsilon > 0$ there is a bounded set $B$ in $X$ such that if both $x_n$ and $y_n$ are outside $B$, then $d_Y(f(x_n),f(y_n)) < \epsilon$. Since that holds for all but finitely many $n$, $d_Y(f(x_n),f(y_n))\to 0$ as $n\to \infty$.
\end{proof}

\begin{Proposition}
Suppose $\mathcal{SCS}$ is a reflexive simple coarse structure on a set $X$ so that every finite subset of $X$ is bounded and there is a countable basis of bounded sets in $X$.
If a function $f:X\to Y$ from $X$ to a metric space $Y$ is slowly oscillating, then it is slowly oscillating from the point of view of the induced large scale structure on $X$.
\end{Proposition}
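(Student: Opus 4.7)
The plan is to argue by contradiction. Suppose $f$ is slowly oscillating in $\mathcal{SCS}$ but fails to be slowly oscillating in $\mathbb{LSS}(\mathcal{SCS})$; then there exist a uniformly bounded cover $\mathcal{U}$ and $\epsilon>0$ so that for every bounded $B\subset X$ one can find $U\in\mathcal{U}$ with $U\cap B=\emptyset$ and $\diam(f(U))\ge \epsilon$. The endgame is to assemble a medium end $\{U_n\}\subset\mathcal{U}$ with $\diam(f(U_n))\ge \epsilon$; by condition 2 in the definition of uniform boundedness this medium end will be uniformly bounded in $\mathcal{SCS}$, so any pair $x_n,y_n\in U_n$ gives equivalent simple ends. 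Choosing $x_n,y_n$ with $d_Y(f(x_n),f(y_n))\ge \epsilon/2$ (possible since $\diam(f(U_n))\ge \epsilon$) will contradict SCS-slow oscillation.

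The main obstacle is that a bounded structure need not be a bornology, so the countable basis $\{B_n\}$ cannot a priori be taken nested --- $B_1\cup B_2$ need not be bounded. The hypothesis that every finite subset of $X$ is bounded closes this gap: given nonempty bounded $C_1,\ldots,C_k$, pick $x_i\in C_i$, observe that $F=\{x_1,\ldots,x_k\}$ is bounded, and iteratively apply axiom 3 to $F\cup C_1,\ F\cup C_1\cup C_2,\ldots$, each step legal since the running union contains $F$ and therefore meets the next $C_{i+1}$ at $x_{i+1}$. Thus the bounded structure is closed under finite unions of nonempty bounded sets, and I replace $B_n$ by $B_n':=B_1\cup\cdots\cup B_n$ to obtain a nested cofinal countable basis.

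With the nested basis in place, each $\st(B_n',\mathcal{U})$ is bounded by condition 1 of uniform boundedness, so the contradiction hypothesis applied at $\st(B_n',\mathcal{U})$ yields $U_n\in\mathcal{U}$ disjoint from $\st(B_n',\mathcal{U})$ with $\diam(f(U_n))\ge \epsilon$; choose $x_n,y_n\in U_n$ with $d_Y(f(x_n),f(y_n))\ge \epsilon/2$. Nestedness makes $\{U_n\}$ a medium end almost for free: for any bounded $A$ one has $A\subseteq B_k'\subseteq B_n'\subseteq \st(B_n',\mathcal{U})$ for all $n\ge k$, hence $U_n\cap A=\emptyset$ for $n\ge k$. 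Condition 2 of uniform boundedness then turns $\{U_n\}$ into a uniformly bounded medium end in $\mathcal{SCS}$, so $\{x_n\}$ and $\{y_n\}$ are equivalent simple ends whose $f$-images stay $\epsilon/2$-apart --- contradicting the SCS-slow oscillation of $f$ and completing the proof.
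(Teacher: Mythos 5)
Your argument is correct and is essentially the paper's own proof: assume failure of large scale slow oscillation, pass to an increasing countable basis of bounded sets, pick $U_n\in\mathcal{U}$ missing the $n$-th basis element with $\diam(f(U_n))$ bounded below, and extract equivalent simple ends $\{x_n\}$, $\{y_n\}$ whose $f$-images stay apart, contradicting SCS-slow oscillation. Your only real additions are harmless refinements: you justify (via the hypothesis that finite sets are bounded, which a mere bounded structure would not give) that the countable basis may be taken nested --- a detail the paper asserts without comment --- and you discard $U_n$ meeting $\st(B_n',\mathcal{U})$ rather than just $B_n'$, which is more than is needed but changes nothing.
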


\begin{proof}
Suppose $f$ is slowly oscillating but is not slowly oscillating from the point of view of the induced large scale structure on $X$. That means there is a uniformly bounded family
$\mathcal{U}$ and $\epsilon > 0$ such that for any bounded subset $B$ of $X$
there is $U_n\in\mathcal{U}$ missing $B$ so that $\diam(f(U_n)) > \epsilon$.
Choose an increasing sequence $B_n$ of bounded subsets of $X$ that is a basis for all bounded subsets of $X$. Choose $U_n\in\mathcal{U}$ missing $B_n$ so that $\diam(f(U_n)) > \epsilon$. Pick points $x_n, y_n\in U_n$ satisfying
$d_Y(f(x_n),f(y_n)) \ge \epsilon$. Notice $\{x_n\}_{n=1}^\infty$ and $\{y_n\}_{n=1}^\infty$  are equivalent simple ends
and $d_Y(f(x_n),f(y_n))\to 0$ as $n\to \infty$ fails, a contradiction.
\end{proof}

\begin{Definition}
Suppose $X$ is a locally compact Hausdorff space and $\mathcal{SCS}$ is a simple coarse structure on $X$. $\mathcal{SCS}$ is \textbf{compatible} with the topology on $X$
if the bounded sets of $\mathcal{SCS}$ coincide with pre-compact sets of $X$.
\end{Definition}

\begin{Definition}
Suppose $X$ is a locally compact Hausdorff space and $\mathcal{SCS}$ is a simple coarse structure on $X$ that is compatible with the topology on $X$. The \textbf{Higson compactification} $\bar X$ of $X$ is the compactification such that $f:\bar X\to [0,1]$ is continuous if and only $f|X$ is continuous and slowly oscillating.

The \textbf{geometric Higson corona} of $\mathcal{SCS}$ is defined as $\bar X\setminus X$.
\end{Definition}

\begin{Theorem}
Suppose $X$ is a $\sigma$-compact locally compact Hausdorff space and $\bar X$
is a compactification of $X$ such that each point of the corona $\bar X\setminus X$ has a countable basis of neighborhoods. The Higson compactification of $X$ with respect to the simple coarse structure induced by $\bar X$ is identical with $\bar X$.
\end{Theorem}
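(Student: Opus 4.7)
The plan is to prove $\bar X = \bar X_H$ (where $\bar X_H$ denotes the Higson compactification) by showing that the two compactifications determine the same subalgebra of $C(X,[0,1])$. Two compactifications of a Tychonoff space coincide precisely when their restriction algebras of continuous $[0,1]$-valued functions match; here the algebra for $\bar X$ is $\{f|_X : f\in C(\bar X,[0,1])\}$ and the algebra for $\bar X_H$ is, by definition, the continuous slowly oscillating $[0,1]$-valued functions on $X$ with respect to the simple coarse structure induced by $\bar X$.

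In the easy direction, any $f\in C(\bar X,[0,1])$ restricts to a slowly oscillating function on $X$: if $\{x_n\}$ and $\{y_n\}$ are equivalent simple ends in the sense of \ref{ExtensionSCS} (matching coronas along every subsequence), then the first theorem of Section 3 yields $|f(x_n)-f(y_n)|\to 0$. This uses that bounded sets of the induced structure are exactly the pre-compact subsets of $X$ (since $X$ is open in $\bar X$ by local compactness).

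For the harder direction, I extend a slowly oscillating continuous $f:X\to[0,1]$ to $\bar X$. For each $p\in\bar X\setminus X$, pick a decreasing countable neighborhood basis $V_1\supseteq V_2\supseteq\ldots$ at $p$ and, using density of $X$ in $\bar X$, choose $x_n\in V_n\cap X$. Then $x_n\to p$ and $\{x_n\}$ is a simple end, because any pre-compact $K\subset X$ has $\bar X\setminus K$ open and containing $p$, so $x_n$ leaves $K$ eventually. Two such sequences $\{x_n\},\{y_n\}$ approaching the same $p$ are equivalent simple ends in $\bar X$: every subsequence still converges to $p$, so both coronas equal $\{p\}$. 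Slow oscillation then forces $\{f(x_n)\}$ to be Cauchy (apply the same argument to two subsequences of a single sequence to rule out two cluster values) with a limit independent of the choice of approaching sequence, and I define $\tilde f(p)$ to be this common limit.

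Continuity of $\tilde f$ on $X$ is automatic since $X$ is open in $\bar X$. At a corona point $p$, first countability reduces continuity to sequential continuity. Given $q_k\to p$ in $\bar X$, I build $x_k\in X$ with $x_k\to p$ and $|\tilde f(q_k)-f(x_k)|<1/k$: if $q_k\in X$ set $x_k=q_k$; if $q_k$ lies in the corona, use its countable basis together with the defining limit of $\tilde f(q_k)$ to pick $x_k$ inside a suitably small neighborhood of $p$ (shrinking with $k$). Then $f(x_k)\to \tilde f(p)$ by construction of $\tilde f(p)$, and hence $\tilde f(q_k)\to \tilde f(p)$.

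The main obstacle is this last step: the approaching sequence $\{q_k\}$ may itself contain corona points, so the extension value $\tilde f(q_k)$ is only available as a limit rather than a direct evaluation. The diagonal substitution of points of $X$ near each $q_k$ is precisely why the hypothesis is first countability at every corona point (not merely at the limit $p$); this is exactly what makes the argument go through and ties $\bar X_H$ back to $\bar X$.
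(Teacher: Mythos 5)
Your proposal is correct and follows essentially the same route as the paper: both reduce the statement to showing that the continuous slowly oscillating functions on $X$ are exactly the restrictions of continuous functions on $\bar X$, define the extension at corona points via the countable neighborhood basis together with slow oscillation, and verify continuity sequentially using first countability of corona points. Your version is slightly tidier in two spots --- citing the first theorem of Section 3 for the easy direction instead of reproving it, and handling approaching sequences that mix points of $X$ with corona points explicitly --- but the underlying argument is the same.
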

\begin{proof}
It suffices to show that slowly oscillating continuous functions on $X$ are exactly those that extend continuously over $\bar X$. \\
Given a continuous function $f:\bar X\to [0,1]$ and given two equivalent simple ends
$\{x_n\}_{n=1}^\infty$ and $\{y_n\}_{n=1}^\infty$ such that $|f(x_n)-f(y_n)|$ does not converge to $0$ as $n\to \infty$, we can (by switching to a subsequence) assume that
$\lim\limits_{n\to\infty}f(x_n)=u$, $\lim\limits_{n\to\infty}f(y_n)=v$ and $u\ne v$.
That implies that all the points in the corona of $\{x_n\}_{n=1}^\infty$ are mapped by $f$ to $u$ and all the points in the corona of $\{y_n\}_{n=1}^\infty$ are mapped by $f$ to $v$, a contradiction since those coronas are identical.

Suppose $g:X\to [0,1]$ is continuous and slowly continuous with respect to the simple coarse structure induced by $\bar X$. Given $x\in\bar X\setminus X$, consider the 
intersection of closures of all sets $f(A\cap X)$, $A$ ranging over all closed neighborhoods of $x$ in $\bar X$. That intersection must consist of exactly one point and that point is defined as the value of $g(x)$. That results in an extension of $g$ to a continuous function on $\bar X$.

Indeed, we may consider only a basis $\{A_n\}_{n\ge 1}$ of closed neighborhoods of $x$ in $\bar X$. In that case any two sequences $x_n, y_n\in A_n\setminus X$ form equivalent simple ends, so $\lim\limits_{n\to\infty}|f(x_n)-f(y_n)|=0$ and $\bigcap\limits_{n=1}^\infty cl(f(A_n\cap X))$ must be a single point.

If $\bar g$ is not continuous at $x\in \bar X\setminus X$, then there is a sequence
$z_n\in \bar X\setminus X$ converging to $x$ such that $\lim\limits_{n\to\infty}\bar g(z_n)$ exists and is not equal to $\bar g(x)$. In that case we may shadow $\{z_n\}_{n\ge 1}$ by a sequence $\{x_n\}_{n\ge 1}$ in $X$ that converges to $x$
and $\lim\limits_{n\to\infty}\bar g(z_n)= \lim\limits_{n\to\infty}g(x_n)$, a contradiction since the latter limit is $\bar g(x)$.
\end{proof}

\begin{Theorem}\label{GromovCoronaIsHigson}
If $X$ is a Gromov hyperbolic space, then
the Higson corona of the induced simple coarse structure equals the Gromov boundary of $X$ if $X$ is proper and geodesic.
\end{Theorem}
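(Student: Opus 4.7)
The plan is to identify $\mathcal{H}(X)$ with the simple coarse structure induced on $X$ by the Gromov compactification $\bar X = X\cup \partial X$ (in the sense of Example \ref{ExtensionSCS}), and then invoke the preceding theorem, which says that the Higson compactification arising from a compactification $\bar X$ (whose corona points have countable neighborhood bases, over a $\sigma$-compact locally compact Hausdorff base) coincides with $\bar X$. Since $X$ is proper and geodesic, $X$ is $\sigma$-compact locally compact, the Gromov compactification is metrizable (so every point of $\partial X$ has a countable neighborhood basis), and pre-compact subsets of $X$ agree with those subsets of $X$ whose closure in $\bar X$ lies in $X$; hence the underlying bounded structures match. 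The work is thus to check that the two equivalence relations on simple ends agree.

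For the forward direction, suppose $\langle x_n,y_n\rangle_p\to\infty$. Given any $a:\NN\to\NN$ with $a(n)\to\infty$, let $\xi\in\partial X$ be an accumulation point in $\bar X$ of $\{x_{a(n)}\}$, realized by a sub-subsequence $x_{a(n_k)}\to\xi$. Passing to a further subsequence we may assume $y_{a(n_k)}\to\eta\in\partial X$. Because $\langle x_{a(n_k)},y_{a(n_k)}\rangle_p\to\infty$ and both sequences converge in $\bar X$, the standard characterization in proper geodesic hyperbolic spaces forces $\xi=\eta$. Thus every accumulation point of $\{x_{a(n)}\}$ is also one of $\{y_{a(n)}\}$, and by symmetry
\[
cl_{\bar X}(\{x_{a(n)}\})\setminus X = cl_{\bar X}(\{y_{a(n)}\})\setminus X.
\]

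For the converse, suppose $\langle x_n,y_n\rangle_p \not\to\infty$. Pick $a(n)\to\infty$ along which $\langle x_{a(n)},y_{a(n)}\rangle_p$ stays bounded by some $M$; by compactness of $\bar X$ and properness (so $x_{a(n)},y_{a(n)}$ leave every compact set) pass to a further sub-function so that $x_{a(n)}\to\xi\in\partial X$ and $y_{a(n)}\to\eta\in\partial X$. The same characterization gives $\xi\neq\eta$, since otherwise $\langle x_{a(n)},y_{a(n)}\rangle_p\to\infty$. Hence the two coronas in $\bar X$ along this $a$ are distinct, contradicting equivalence in the $\bar X$-induced structure.

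Combining these two steps, the simple coarse structure $\mathcal{H}(X)$ is exactly the one induced by the Gromov compactification, and the preceding theorem identifies the Higson compactification of $(X,\mathcal{H}(X))$ with $\bar X$, so the Higson corona equals $\partial X$. The main obstacle is Step 2: one must be careful to use compactness of $\bar X$ together with the classical fact that, in a proper geodesic $\delta$-hyperbolic space, two sequences in $X$ converging in $\bar X$ to points $\xi,\eta\in\partial X$ satisfy $\langle x_n,y_n\rangle_p\to\infty$ if and only if $\xi=\eta$; everything else is routine bookkeeping involving the diagonal subsequence argument encoded in the parameter $a$.
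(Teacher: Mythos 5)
Your proposal is correct and follows essentially the same route as the paper: reduce the statement to showing that $\mathcal{H}(X)$ coincides with the simple coarse structure induced by the Gromov compactification $\bar X=X\cup\partial X$ (then apply the preceding theorem identifying the Higson compactification with $\bar X$), and verify both directions by passing to subsequences converging to boundary points and using the standard fact that Gromov products tend to infinity exactly when the limit boundary points agree. Your write-up is in fact slightly more explicit than the paper's about checking the hypotheses of the earlier theorem (metrizability of $\bar X$, matching bounded structures), but the argument is the same.
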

\begin{proof}
Consider the Gromov compactification $\bar X=X\cup \partial X$ of $X$ obtained by adding the Gromov boundary $\partial X$ to $X$. We need to show that the induced simple coarse structure on $X$ from $\bar X$ is identical with the one determined by the Gromov hyperbolic metric on $X$.

Let $a$ be a fixed basepoint of $X$.
As described in \cite{KapBen} (Definition 2.9), $ \partial X$ consists of equivalence classes
$[\{x_n\}_{n\ge 1}]$ of sequences $\{x_n\}_{n\ge 1}$ converging to infinity in $X$.
That means $\lim\inf\limits_{i,j\to\infty}\left<x_i, x_j\right>_a=\infty$ and two sequences converging to infinity $\{x_n\}_{n\ge 1}$ and $\{y_n\}_{n\ge 1}$ are equivalent if
$$\lim\inf\limits_{i,j\to\infty}\left<x_i, y_j\right>_a=\infty.$$
Notice that a simple end $\{x_n\}_{n\ge 1}$ converges to $p\in \partial X$ if and only if $\{x_n\}_{n\ge 1}$ converges to infinity and is equivalent to $p$.

We need to show that, given two simple ends $\{x_n\}_{n\ge 1}$ and $\{y_n\}_{n\ge 1}$
in $X$, the condition $\lim\limits_{n\to\infty}\left<x_n, y_n\right>_a =\infty$
if and only if, for each strictly increasing function $b:\mathbb{N}\to\mathbb{N}$
the coronas of closures in $X\cup\partial X$ of $\{x_{b(n)}\}_{n=1}^\infty$ and $\{y_{b(n)}\}_{n=1}^\infty$ are equal.

Suppose there is a strictly increasing function $b:\mathbb{N}\to\mathbb{N}$ and $r > 0$
such that $ \left<x_{b(n)}, y_{b(n)}\right>_a  < r$ for all $n\ge 1$. We may assume each sequence $\{x_{b(n)}\}_{n\ge 1}$ and $\{y_{b(n)}\}_{n\ge 1}$ converges to infinity. In that case they converge to different points in $\partial X$, a contradiction.

Suppose $p\in\partial X$ belongs to the closure of $\{x_{b(n)}\}_{n=1}^\infty$
for some strictly increasing function $b:\mathbb{N}\to\mathbb{N}$.
We may assume $\{x_{b(n)}\}_{n\ge 1}$ and $\{y_{b(n)}\}_{n\ge 1}$ converges to infinity,
hence it converges to $p$. Since $\lim\limits_{n\to\infty}\left<x_{b(n)}, y_{b(n)}\right>_a =\infty$ and we can choose a subsequence of $\{y_{b(n)}\}_{n\ge 1}$ converging to infinity,
$p$ also belongs to the closure of $\{y_{b(n)}\}_{n\ge 1}$.
\end{proof}

\section{Freundenthal compactification is a Higson compactification}
In this section we describe how Freundenthal compactification can be viewed from the point of view of simple coarse theory.
\begin{Definition}
Suppose $X$ is a $\sigma$-compact locally compact and locally connected Hausdorff space. Two simple ends $\{x_n\}_{n=1}^\infty$ and $\{y_n\}_{n=1}^\infty$ are \textbf{Freundenthal-equivalent} if and only if there is no compact subset $K$ of $X$ and a component $U$ of $X\setminus K$ that contains infinitely many points of one end and only finitely many points of the other end.

Equivalently, for any strictly increasing function $a:\mathbb{N}\to\mathbb{N}$ and for any compact subset $K$ of $X$
the following conditions hold for any component $U$ of $X\setminus K$:\\
1. $\{x_{a(n)}\}_{n=1}^\infty\subset U$ implies $y_{a(n)}\in U$ for almost all $n$,\\
2. $\{y_{a(n)}\}_{n=1}^\infty\subset U$ implies $x_{a(n)}\in U$ for almost all $n$.
\end{Definition}

\begin{Theorem}
Suppose $X$ is a $\sigma$-compact locally compact and locally connected Hausdorff space. The Higson compactification $\bar X$ of the simple coarse structure on $X$ induced by the
Freundenthal-equivalence is the Freundenthal compactification of $X$.
\end{Theorem}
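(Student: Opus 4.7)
The plan is to show that the simple coarse structure defined by Freundenthal-equivalence on $X$ agrees with the simple coarse structure induced on $X$ by the Freundenthal compactification $\bar X_F$ in the sense of Example \ref{ExtensionSCS}, after which the earlier theorem stating that the Higson compactification of a compactification with first-countable corona coincides with that compactification itself will yield the conclusion. Under the standing hypotheses, $\bar X_F$ exists and its remainder is the space of ends of $X$: coherent sequences $e=(U_n)_{n\ge 1}$ where $U_n$ is a connected component of $X\setminus K_n$, $U_{n+1}\subseteq U_n$, and $K_1\subseteq K_2\subseteq \cdots$ is an exhaustion of $X$ by compact sets with $K_n\subseteq\mathrm{int}(K_{n+1})$. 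The sets $\hat U_n := U_n\cup\{e'=(V_k):V_n=U_n\}$ form a countable neighborhood basis at $e$ in $\bar X_F$, so each point of the corona is first-countable and the earlier theorem applies once the identification of coarse structures is in place.

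The technical heart of the proof is the identification itself: two simple ends $\{x_n\}$ and $\{y_n\}$ are Freundenthal-equivalent if and only if, for every strictly increasing $a:\NN\to\NN$, the coronas of $\{x_{a(n)}\}$ and $\{y_{a(n)}\}$ in $\bar X_F$ coincide. For the forward direction, I fix such an $a$ and a cluster point $e=(U_n)$ of $\{x_{a(n)}\}$ in the corona; for each $n$ the set $\{k:x_{a(k)}\in U_n\}$ is infinite, yielding a strictly increasing subsequence $c_n$ of $a$ with $x_{c_n(k)}\in U_n$ for all $k$. Condition 1 of the Freundenthal definition then gives $y_{c_n(k)}\in U_n$ for almost all $k$, producing infinitely many indices $m$ with $y_{a(m)}\in U_n$; since this holds for every $n$, the end $e$ is also a cluster point of $\{y_{a(n)}\}$, and the reverse inclusion is symmetric. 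For the reverse direction, if Freundenthal-equivalence fails then condition 1 or 2 produces a strictly increasing $a$ and a pair $(K,U)$ with, say, $x_{a(n)}\in U$ for all $n$ while $y_{a(n)}\notin U$ for infinitely many $n$; extracting a further subsequence $b$ of $a$ with $y_{b(n)}\notin U$ for all $n$, any cluster point of $\{x_{b(n)}\}$ in the corona has the property that its component at every level $K_m\supseteq K$ is contained in $U$, whereas no cluster point of $\{y_{b(n)}\}$ has this property, so the two coronas are disjoint.

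Once the simple coarse structures are identified, the conclusion follows by invoking the earlier theorem on compactifications with first-countable coronas: the Higson compactification of the simple coarse structure induced by Freundenthal-equivalence coincides with $\bar X_F$, i.e.\ with the Freundenthal compactification of $X$. I expect the forward direction of the identification above to be the main obstacle, since it requires a careful subsequence extraction to translate the combinatorial Freundenthal condition into topological clustering in $\bar X_F$. The essential tools for this translation are local connectedness (which ensures that whenever $K\subseteq K_m$ each component of $X\setminus K_m$ is contained in a unique component of $X\setminus K$) and the countable exhaustion supplied by $\sigma$-compactness, which simultaneously gives first-countability at every point of the corona of $\bar X_F$.
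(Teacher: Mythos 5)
Your proof is correct in substance but follows a genuinely different route from the paper's. The paper never identifies the Freundenthal structure with the structure induced by a concrete compactification; instead it verifies the universal characterization of the Freundenthal compactification cited from Peschke: it shows directly that the Higson corona of the Freundenthal structure is zero-dimensional (via the clopen coronas of components of $X\setminus K_n$ and nested sequences of such components), and that the Higson compactification dominates every compactification with zero-dimensional corona (every continuous function on such a compactification restricts to a slowly oscillating function, using a separation $K=\hat X\setminus(U\cup V)$). You instead take the classical end compactification $\bar X_F$ as given, prove that Freundenthal-equivalence coincides with the equivalence of Example \ref{ExtensionSCS} induced by $\bar X_F$, and then quote the earlier theorem on compactifications whose corona points have countable neighborhood bases; your subsequence extractions in both directions are sound, and you rightly work throughout with the ``equivalently'' subsequence form of the definition, which is the form the argument genuinely needs. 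What your route buys is economy: the zero-dimensionality and maximality arguments are replaced by one combinatorial identification plus a citation of an earlier result. What it costs is heavier reliance on the classical theory of ends: you need $\bar X_F$ to be the end compactification with the sets $\hat U_n$ forming countable neighborhood bases at corona points, which holds in the standard connected setting but is not automatic under the theorem's literal hypotheses (for a disjoint union of infinitely many rays the end space is not compact and the Freundenthal corona is not first countable), whereas the paper's route invokes only the abstract characterization --- though its own dimension-zero argument carries the same implicit restriction. Two small points to tidy: the equivalence in \ref{ExtensionSCS} quantifies over all $a$ with $a(n)\to\infty$, not only strictly increasing ones (harmless, since the corona of $\{x_{a(n)}\}$ depends only on the infinite range of $a$), and in the converse direction you should remark that the two coronas you show to be disjoint are nonempty (a subsequence of a simple end clusters in the compact space $\bar X_F$ but cannot cluster in $X$), so disjointness indeed yields inequality.
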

\begin{proof}
What we need to prove (see \cite{Peschke}) is that
$\bar X\setminus X$ is of dimension $0$ and $\bar X$ dominates any compactification
$\hat X$ of $X$ whose corona is of dimension $0$.

Suppose $\hat X$ is a compactification of $X$ whose corona is of dimension $0$.
Consider a continuous $f:\hat X\to [0,1]$ and assume there are two simple ends $\{x_n\}_{n=1}^\infty$ and $\{y_n\}_{n=1}^\infty$ that are Freundenthal-equivalent but,
for some $\epsilon > 0$, one has $|f(x_n)-f(y_n)| > \epsilon$ for all $n\ge 1$.
Therefore coronas of $\{x_n\}_{n=1}^\infty$ and $\{y_n\}_{n=1}^\infty$ are disjoint and 
there are two disjoint open sets $U$ and $V$ of $\hat X$ such that
$\hat X\setminus X\subset U\cup V$, the corona of $\{x_n\}_{n=1}^\infty$ is contained in $U$, and the corona of $\{y_n\}_{n=1}^\infty$ is contained in $V$.
Put $K=\hat X\setminus (U\cup V)$ and notice the existence of a component of $X\setminus K$ that contains an infinite subsequence $\{x_{a(n)}\}_{n=1}^\infty$ 
but none of $\{y_{a(n)}\}_{n=1}^\infty$, a contradiction.

Pick an increasing sequence $\{K_n\}_{n\ge 1}$ of compact subsets of $X$ whose union is $X$ and $K_n\subset int(K_{n+1}$ for all $n\ge 1$.
Notice that the corona of each component $U$ of $X\setminus K_n$, $n\ge 1$, is open-closed in $\bar X\setminus X$. Indeed there is a continuous slowly oscillating function on $X$ that equals $0$ on $U\setminus K_{n+1}$ and equals $1$ on $X\setminus (U\cup K_{n+1}$. Its extension over $\bar X$ has only two values on $\bar X\setminus X$: $0$ on the corona of $U$ and $1$ on its complement in $\bar X\setminus X$.

Given any decreasing sequence $\{U_n\}_{n\ge 1}$ of components of $X\setminus K_n$,
the intersection of their closures in $\bar X$ must be a point and each point in $\bar X\setminus X$ belongs to such intersection. Thus $\bar X\setminus X$ is of dimension $0$.
Indeed, given any slowly oscillating continuous function $f:X\to [0,1]$, the diameters of $f(U_n)$ must converge to $0$ as otherwise there are Freundenthal-equivalent ends
$\{x_n\}_{n=1}^\infty$ and $\{y_n\}_{n=1}^\infty$ such that $|f(x_n)-f(y_n)|$ does not converge to $0$.
\end{proof}

\section{Applications}

In this section we extend results of Mine-Yamashita \cite{MineYama} while providing very simple proofs of them.

First, we will translate concepts from coarse theory to simple coarse theory.

\begin{Proposition}\label{BornologousInSCC}
Suppose $\mathcal{SCS}_X$ is a reflexive simple coarse structure on a set $X$ inducing the
large scale structure $\mathbb{LSS}(\mathcal{SCS}_X)$ so that every unbounded subset of $X$ contains a simple end. Suppose $\mathcal{SCS}_Y$ is a reflexive  simple coarse structure on a set $Y$ inducing the
large scale structure $\mathbb{LSS}(\mathcal{SCS}_Y)$. If $f$ is a function from $X$ to $Y$, then the following conditions are equivalent:\\
1. $f$ is bornologous when considered as a function from $(X,\mathbb{LSS}(\mathcal{SCS}_X))$ to $(Y,\mathbb{LSS}(\mathcal{SCS}_Y))$,\\
2. $f$ preserves bounded sets and, for any two equivalent simple ends $\{x_n\}_{n=1}^\infty$ and $\{y_n\}_{n=1}^\infty$ in $X$, either one of the sequences $\{f(x_n)\}_{n=1}^\infty$, $\{f(y_n)\}_{n=1}^\infty$ is not a simple end in $Y$ or both of them are simple ends in $Y$ and they are equivalent.
\end{Proposition}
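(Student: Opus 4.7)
The plan is to prove each direction by translating between the large-scale viewpoint (uniformly bounded families) and the simple-coarse viewpoint (equivalent pairs of simple ends), with reflexivity providing the bridge.

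For $(1)\Rightarrow (2)$, observe first that the singleton family $\{A\}$ lies in $\mathbb{LSS}(\mathcal{SCS}_X)$ precisely when $A$ is bounded (the star and medium-end conditions are trivial for a singleton family), so a bornologous $f$ sends bounded sets to bounded sets. If $\{x_n\}$ and $\{y_n\}$ are equivalent simple ends in $X$, reflexivity of $\mathcal{SCS}_X$ makes $\{\{x_n,y_n\}\}_{n=1}^\infty$ a uniformly bounded medium end, and bornologous $f$ then sends it to a family $\{\{f(x_n),f(y_n)\}\}_{n=1}^\infty$ uniformly bounded in $\mathbb{LSS}(\mathcal{SCS}_Y)$. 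Under the assumption that both image sequences are simple ends in $Y$, this family is itself a medium end in $Y$ (any bounded subset of $Y$ meets finitely many $f(x_n)$ and finitely many $f(y_n)$), so condition (2) of the $\mathbb{LSS}$ definition forces it to be uniformly bounded in $\mathcal{SCS}_Y$, giving equivalence of $\{f(x_n)\}$ and $\{f(y_n)\}$.

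For $(2)\Rightarrow (1)$, take $\mathcal{U}$ uniformly bounded in $\mathbb{LSS}(\mathcal{SCS}_X)$ and verify the two axioms of $\mathbb{LSS}(\mathcal{SCS}_Y)$ for $\{f(U):U\in\mathcal{U}\}$. For the medium-end axiom, a medium end $\{f(U_n)\}_{n=1}^\infty$ in $Y$ lifts to a medium end $\{U_n\}$ in $X$: if bounded $A\subset X$ met $U_n$ for infinitely many $n$, then bounded $f(A)\subset Y$ would meet $f(U_n)$ for infinitely many $n$, contradicting the medium-end property in $Y$. Because $\{U_n\}\subset\mathcal{U}$, it is uniformly bounded in $\mathcal{SCS}_X$; lifting arbitrary choices $u_n,v_n\in f(U_n)$ to $x_n,y_n\in U_n$ produces equivalent simple ends in $X$ whose images $\{u_n\},\{v_n\}$ are automatically simple ends in $Y$, so (2) forces them equivalent in $\mathcal{SCS}_Y$.

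For the star axiom, suppose $st(B,\{f(U)\})$ is unbounded for some bounded $B\subset Y$. Let $W$ be the union of those $U\in\mathcal{U}$ with $f(U)\cap B\neq\emptyset$; then $f(W)$ contains $st(B,\{f(U)\})\setminus B$, which is unbounded, so $W$ is unbounded in $X$ because $f$ preserves bounded sets. The extraction hypothesis supplies a simple end $\{w_k\}\subset W$ with $w_k\in U_k\in\mathcal{U}$ and $f(U_k)\cap B\neq\emptyset$; uniform boundedness of $\mathcal{U}$ then forces $\{U_k\}$ to be a medium end in $X$ (if some bounded $A\subset X$ met infinitely many $U_k$, these would sit inside the bounded set $st(A,\mathcal{U})$, contradicting that $\{w_k\}$ is simple), hence uniformly bounded in $\mathcal{SCS}_X$. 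Choosing $x_k\in U_k\cap f^{-1}(B)$ gives an equivalent simple end $\{x_k\}$ paired with $\{w_k\}$, and the unbounded spread of $\bigcup_k f(U_k)$ in $Y$ is used to select a companion sequence in the $U_k$'s whose image is a genuine simple end in $Y$, contradicting (2).

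In both directions the pivotal tool is reflexivity, converting equivalence of simple ends into uniform boundedness of the associated pair-family. The main obstacle is the star axiom in $(2)\Rightarrow (1)$: arranging both companion image sequences to be bona fide simple ends in $Y$ relies essentially on the hypothesis that every unbounded subset of $X$ contains a simple end, and this extraction must be performed carefully to produce the final contradiction with (2).
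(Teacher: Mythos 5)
Your direction (1)$\Rightarrow$(2) and the medium-end half of (2)$\Rightarrow$(1) are essentially the paper's own argument: reflexivity turns equivalent ends into the uniformly bounded pair family $\{\{x_n,y_n\}\}_{n\ge 1}$, and a medium end $\{f(U_n)\}_{n\ge 1}$ in $Y$ is pulled back to a medium end $\{U_n\}_{n\ge 1}$ in $X$ whose point choices give equivalent ends. Where you go beyond the paper is in trying to verify the star axiom for $f(\mathcal{U})$, and that is where the proposal breaks. The pair of equivalent simple ends you produce there is $\{w_k\}$ together with $\{x_k\}$, where $f(x_k)\in B$; since $B$ is bounded, $\{f(x_k)\}$ is \emph{not} a simple end in $Y$, so condition (2) is satisfied vacuously for this pair and no contradiction appears. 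Swapping in a ``companion sequence whose image is a genuine simple end'' does not help: any pair in which one image sequence stays inside $B$ again satisfies (2) vacuously, and the existence of a choice from the $U_k$'s whose image is a simple end is not guaranteed either, because the hypothesis that unbounded sets contain simple ends is imposed on $X$, not on $Y$.

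This gap cannot be closed, because condition (2) genuinely does not imply the star condition for image families. Take $X=\NN\times\{0,1\}$ and $Y=\NN$ with the metric simple coarse structures of Example \ref{MetricSCS}; both are reflexive, every unbounded subset of $X$ contains a simple end, and bounded means finite. Let $f(n,0)=0$ and $f(n,1)=n$. Then $f$ preserves bounded sets, and if two equivalent simple ends in $X$ both have images that are simple ends in $Y$, the ends eventually lie in $\NN\times\{1\}$ at uniformly bounded distance, so their images are equivalent; hence (2) holds. Yet $\mathcal{U}=\{\{(n,0),(n,1)\}\}_{n}$ belongs to $\mathbb{LSS}(\mathcal{SCS}_X)$, while $f(\mathcal{U})=\{\{0,n\}\}_{n}$ cannot be coarsened by any cover of $Y$ satisfying the star condition, since the star of the bounded set $\{0\}$ would contain all of $\NN$; so $f$ is not bornologous (indeed it is not bornologous even in the classical metric sense). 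Note that the paper's proof of (2)$\Rightarrow$(1) checks only the medium-end condition for $f(\mathcal{U})$ and never addresses the star axiom, so your instinct that this needs separate verification was correct; but with $\mathbb{LSS}(\mathcal{SCS}_Y)$ as defined (star condition plus medium-end condition), the implication (2)$\Rightarrow$(1) fails as literally stated, and one must either strengthen (2) so as to control pairs of equivalent ends in which exactly one image sequence fails to be a simple end, or weaken the reading of ``uniformly bounded'' in the target.
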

\begin{proof}
$f$ being bornologous means it preserves uniformly bounded families.

1)$\implies$2). 
Given any two simple ends
$\{x_n\}_{n=1}^\infty$ and $\{y_n\}_{n=1}^\infty$ in $X$, $\{\{x_n,y_n\}\}_{n\ge 1}$ is uniformly bounded, hence $\{\{f(x_n),f(y_n)\}\}_{n\ge 1}$ is uniformly bounded
and $\{f(x_n)\}_{n=1}^\infty$, $\{f(y_n)\}_{n=1}^\infty$ are equivalent in $Y$ if they are simple ends.

2)$\implies$1). 
Suppose $\mathcal{U}$ is a uniformly bounded cover of $X$. To show $f(\mathcal{U})$ is uniformly bounded consider a medium end $\{f(U_n)\}_{n\ge 1}$ in $Y$, where $U_n\in\mathcal{U}$.
Notice $\{U_n\}_{n\ge 1}$ is a medium end in $X$.
Given $x_n, y_n\in U_n$, both $\{x_n\}_{n=1}^\infty$ and $\{y_n\}_{n=1}^\infty$ are equivalent simple ends in $X$. Therefore $\{f(x_n)\}_{n=1}^\infty$ and $\{f(y_n)\}_{n=1}^\infty$
are equivalent simple ends in $Y$ proving that $\{f(U_n)\}_{n\ge 1}$ is uniformly bounded in $Y$.
\end{proof}

\begin{Corollary}
Suppose $\mathcal{SCS}_X$ is a reflexive simple coarse structure on a set $X$ inducing the
large scale structure $\mathbb{LSS}(\mathcal{SCS}_X)$ so that every unbounded subset of $X$ contains a simple end. Suppose $\mathcal{SCS}_Y$ is a reflexive  simple coarse structure on a set $Y$ inducing the
large scale structure $\mathbb{LSS}(\mathcal{SCS}_Y)$. If $f$ is a function from $X$ to $Y$, then the following conditions are equivalent:\\
1. $f$ is coarse bornologous when considered as a function from $(X,\mathbb{LSS}(\mathcal{SCS}_X))$ to $(Y,\mathbb{LSS}(\mathcal{SCS}_Y))$,\\
2. $f$ preserves bounded sets, preserves simple ends, and preserves equivalence of ends.
\end{Corollary}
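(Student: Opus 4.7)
The plan is to deduce this corollary directly from Proposition \ref{BornologousInSCC} together with the standard fact that ``coarse bornologous'' means bornologous plus proper (i.e.\ preimages of bounded sets are bounded), and to use the hypothesis ``every unbounded subset of $X$ contains a simple end'' precisely to convert properness into the statement that $f$ preserves simple ends.

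For the implication $1)\Rightarrow 2)$, suppose $f$ is coarse bornologous. Bornologousness, applied to the one-element cover $\{B\}$ for $B$ bounded, immediately gives that $f$ preserves bounded sets. To see that $f$ preserves simple ends, take a simple end $\{x_n\}_{n=1}^\infty$ in $X$ and a bounded set $B\subset Y$; by properness $f^{-1}(B)$ is bounded, hence $x_n\in f^{-1}(B)$ for only finitely many $n$, so $f(x_n)\in B$ for only finitely many $n$, i.e.\ $\{f(x_n)\}$ is a simple end. Finally, for equivalent simple ends $\{x_n\}$ and $\{y_n\}$, Proposition \ref{BornologousInSCC} provides the dichotomy between $\{f(x_n)\}, \{f(y_n)\}$ either failing to be simple ends or being equivalent simple ends; properness rules out the first alternative, so we get equivalence of ends.

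For $2)\Rightarrow 1)$, the hypothesis that $f$ preserves simple ends and equivalence of ends is strictly stronger than condition 2) of Proposition \ref{BornologousInSCC}, so $f$ is bornologous. It remains to check properness. Suppose for contradiction that some bounded $B\subset Y$ has $f^{-1}(B)$ unbounded. By the assumption on $X$, the set $f^{-1}(B)$ contains a simple end $\{x_n\}_{n=1}^\infty$. Since $f$ preserves simple ends, $\{f(x_n)\}_{n=1}^\infty$ is a simple end in $Y$, but by construction it lies entirely inside the bounded set $B$, contradicting the very definition of simple end. Hence $f^{-1}(B)$ is bounded for every bounded $B$, and combined with bornologousness this yields coarse bornologous.

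The only substantive point is the properness direction, and that is exactly where the hypothesis that every unbounded subset of $X$ contains a simple end is used: without it one could have $f^{-1}(B)$ unbounded yet every sequence in it bounded (e.g.\ under a coarser bounded structure), breaking the equivalence. Everything else is a direct translation through Proposition \ref{BornologousInSCC}, so I expect no real obstacle beyond noting that ``coarse bornologous'' in the sense of \cite{Roe lectures} adds precisely the properness condition on top of bornologousness.
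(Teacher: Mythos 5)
Your argument is correct and follows essentially the same route as the paper: properness (co-preservation of bounded sets) gives preservation of simple ends in $1)\Rightarrow 2)$, Proposition \ref{BornologousInSCC} handles the bornologous part, and in $2)\Rightarrow 1)$ the hypothesis that every unbounded subset of $X$ contains a simple end is used exactly as in the paper to show $f^{-1}(B)$ must be bounded. One small quibble: $\{B\}$ alone is not a cover of $X$, so it is not literally a uniformly bounded family in $\mathbb{LSS}(\mathcal{SCS}_X)$; but this remark is dispensable, since preservation of bounded sets is already part of condition 2 of Proposition \ref{BornologousInSCC}, which you invoke anyway.
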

\begin{proof}
$f$ being coarse bornologous means it co-preserves bounded sets and preserves uniformly bounded families.

1)$\implies$2). Suppose $\{x_n\}_{n=1}^\infty$ is a simple end in $X$ but $\{f(x_n)\}_{n=1}^\infty$
is not a simple end in $Y$. Therefore, there is a bounded subset $B$ of $Y$ such that
$\{n\in \mathbb{N}\mid f(x_n)\in B\}$ is infinite. Consequently, $\{n\in \mathbb{N}\mid x_n\in f^{-1}(B)\}$ is infinite, a contradiction as $f^{-1}(B)$ is a bounded subset of $X$.

2)$\implies$1). $f$  co-preserves bounded sets. Indeed, if $f^{-1}(B)$ is unbounded in $X$ for some bounded subset $B$ of $Y$, then it contains a simple end that cannot be mapped to a simple end in $Y$.
\end{proof}

\begin{Corollary}
Suppose $\mathcal{SCS}_X$ is a reflexive simple coarse structure on a set $X$ inducing the
large scale structure $\mathbb{LSS}(\mathcal{SCS}_X)$ so that every unbounded subset of $X$ contains a simple end. Suppose $\mathcal{SCS}_Y$ is a reflexive  simple coarse structure on a set $Y$ inducing the
large scale structure $\mathbb{LSS}(\mathcal{SCS}_Y)$. If $f, g$ are functions from $X$ to $Y$
preserving bounded sets, preserving simple ends, and preserving equivalence of ends, then the following conditions are equivalent:\\
1. $f$ and $g$ are close coarse bornologous functions when considered as functions from $(X,\mathbb{LSS}(\mathcal{SCS}_X))$ to $(Y,\mathbb{LSS}(\mathcal{SCS}_Y))$,\\
2. For each simple end $\{x_n\}_{n=1}^\infty$ in $X$,
$\{f(x_n)\}_{n=1}^\infty$ and $\{g(x_n)\}_{n=1}^\infty$ are simple ends in $Y$ and they are equivalent.
\end{Corollary}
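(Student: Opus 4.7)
The plan is to observe that by the preceding corollary, the hypotheses already make $f$ and $g$ coarse bornologous between the induced large scale structures, so the substantive content of the equivalence is the closeness condition, which unwinds to uniform boundedness of the family $\mathcal{W} := \{\{f(x), g(x)\} : x \in X\}$ in $\mathbb{LSS}(\mathcal{SCS}_Y)$. Everything then reduces to the interplay between this uniform boundedness and the simple-end condition in 2), mediated by reflexivity of $\mathcal{SCS}_Y$ and the definition of the induced large scale structure.

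For 1)$\implies$2), I would fix a simple end $\{x_n\}_{n=1}^\infty$ in $X$. By hypothesis, $\{f(x_n)\}_{n=1}^\infty$ and $\{g(x_n)\}_{n=1}^\infty$ are simple ends in $Y$, so $\{\{f(x_n), g(x_n)\}\}_{n\ge 1}$ is a medium end drawn from the uniformly bounded family $\mathcal{W}$. By condition (2) in the definition of $\mathbb{LSS}(\mathcal{SCS}_Y)$, this medium end must be uniformly bounded, and applying Definition \ref{MediumEndIsUniformlyBoundedDef} to the selection $u_n := f(x_n)$, $v_n := g(x_n)$ yields the desired equivalence.

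For 2)$\implies$1), I would verify the two defining clauses of $\mathbb{LSS}(\mathcal{SCS}_Y)$-uniform-boundedness for $\mathcal{W}$. The medium-end clause is the heart of the argument: given a medium end $\{\{f(x_n), g(x_n)\}\}_{n\ge 1}$ in $\mathcal{W}$, I would first check that $\{x_n\}$ itself is a simple end in $X$ (otherwise some bounded $B\subseteq X$ would contain infinitely many $x_n$, whence $f(B)$ would contain infinitely many $f(x_n)$, contradicting the medium-end property in $Y$); hypothesis 2) then provides equivalence of $\{f(x_n)\}$ and $\{g(x_n)\}$, and reflexivity of $\mathcal{SCS}_Y$ (Definition \ref{ReflexiveSCS}) promotes this equivalence to the statement that $\{\{f(x_n), g(x_n)\}\}$ is a uniformly bounded medium end. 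For the star clause, the previous corollary gives that $f$ and $g$ are coarse bornologous, so $f^{-1}(B)$ and $g^{-1}(B)$ are bounded whenever $B\subseteq Y$ is bounded; each pair $\{f(x), g(x)\}$ meeting $B$ then bridges $B$ with a point of $g(f^{-1}(B))\cup f(g^{-1}(B))$, so iterated application of axiom 3 of bounded structures assembles $st(B, \mathcal{W})$ into a bounded set.

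The main obstacle I expect is the star clause, which is delicate because bounded structures, unlike bornologies, close only under unions of intersecting bounded sets; the trick is to exploit that every pair of $\mathcal{W}$ contributing to $st(B, \mathcal{W})$ already shares a point with $B$, so the intersection hypothesis of axiom 3 is automatically satisfied. The medium-end clause, by contrast, collapses almost immediately once one bridges from the given medium end in $Y$ back to a simple end $\{x_n\}$ in $X$ and invokes reflexivity of $\mathcal{SCS}_Y$.
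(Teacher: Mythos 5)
Your overall route coincides with the paper's: both directions are reduced to uniform boundedness of the family $\mathcal{W}=\{\{f(x),g(x)\}\}_{x\in X}$, the implication 1)$\Rightarrow$2) is read off from the medium-end clause, and for 2)$\Rightarrow$1) you argue, exactly as the paper does, that a medium end $\{\{f(x_n),g(x_n)\}\}_{n\ge 1}$ forces $\{x_n\}_{n\ge 1}$ to be a simple end in $X$ (your justification via boundedness of $f(B)$ correctly fills in a step the paper merely asserts), after which hypothesis 2) and reflexivity of $\mathcal{SCS}_Y$ give uniform boundedness of that medium end. Where you go beyond the paper is in trying to verify the star clause of the induced large scale structure, a condition the paper's proof passes over in silence.

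That star-clause argument, however, has a genuine gap. ``Iterated application of axiom 3'' only produces finite unions: $st(B,\mathcal{W})$ is in general a union of infinitely many pairs, and a bounded structure (unlike a bornology) gives no control over infinite unions of bounded sets, even when all of them meet $B$ (think of the union of all balls centered at a basepoint of a metric space). Moreover, you use the pairs $\{f(x),g(x)\}$ as bounded bridges without justification: a two-point set need not be bounded in a bounded structure, and boundedness of such a pair has to be extracted from hypothesis 2) together with reflexivity of $\mathcal{SCS}_Y$, by prepending $x$ to some simple end of $X$ (such an end exists as soon as $X$ is unbounded; if $X$ is bounded, this step genuinely breaks down). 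The repair is to avoid the infinite union altogether: by the preceding corollary $f^{-1}(B)$ and $g^{-1}(B)$ are bounded in $X$, hence $g(f^{-1}(B))$ and $f(g^{-1}(B))$ are bounded in $Y$, and $st(B,\mathcal{W})\subset B\cup g(f^{-1}(B))\cup f(g^{-1}(B))$; now choose a single $x_0\in f^{-1}(B)$ and a single $x_1\in g^{-1}(B)$ (if these sets are nonempty) and note that the bounded pair $\{f(x_0),g(x_0)\}$ meets both $B$ and $g(f^{-1}(B))$, while $\{f(x_1),g(x_1)\}$ meets both $B$ and $f(g^{-1}(B))$, so finitely many applications of axiom 3 chain these five bounded sets into one bounded set containing $st(B,\mathcal{W})$. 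With that substitution your argument is complete and, on the medium-end side, identical to the paper's.
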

\begin{proof}
$f$ and $g$ being close means that $\{f(x),g(x)\}_{x\in X}$ is a uniformly bounded family in $Y$.
Therefore 1$\implies$2) is obvious.

2)$\implies$1). Given a medium end $\{f(x_n),g(x_n)\}_{n\ge 1}$, $\{x_n\}_{n=1}^\infty$ is a simple end in $X$. Since $\{f(x_n)\}_{n=1}^\infty$ and $\{g(x_n)\}_{n=1}^\infty$ are equivalent and the simple coarse structure on $Y$ is reflexive, $\{f(x_n),g(x_n)\}_{n\ge 1}$ is uniformly bounded.
Thus, $\{f(x),g(x)\}_{x\in X}$ is a uniformly bounded family in $Y$.
\end{proof}

\begin{Example}
Given a Frechet space $X$ (see \cite{En}), let $\mathcal{B}$ be the family of all its finite subsets.
Two simple ends in $(X,\mathcal{B})$ are declared equivalent if either they are identical or they converge to the same point. Then, any function $f:X\to Y$ between Frechet spaces is bornologous if it is continuous.
\end{Example}

\begin{Definition}
A function $f:X\to Y$ of metric spaces is \textbf{uniformly continuous} on a subset $A$ of $X$
if for every $\epsilon > 0$ there is $\delta > 0$ such that $d_X(a,b) < \delta$ and $a\in A$
implies $d_Y(f(a),f(b)) < \epsilon$.
\end{Definition}

\begin{Theorem}
Suppose $(\bar X,d_X)$, $(\bar Y,d_Y)$ are metric spaces and $X\subset \bar X$, $Y\subset \bar Y$ are proper dense subsets. Consider induced simple coarse structures on $X$ and $Y$ as described in \ref{MetricExtensionSCS}.\\
A function $f:X\to Y$ preserving bounded sets is coarse bornologous if
$f$ extends to $\bar f:\bar X\to \bar Y$ that is uniformly continuous on $\bar X\setminus X$ and $\bar f(\bar X\setminus X)\subset \bar Y\setminus Y$.
\end{Theorem}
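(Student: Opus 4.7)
The plan is to invoke the corollary following \ref{BornologousInSCC} which characterizes coarse bornologous maps as precisely those that preserve bounded sets, preserve simple ends, and preserve equivalence of simple ends. By the corollary following \ref{ReflexiveSCS}, the simple coarse structures of \ref{MetricExtensionSCS} on both $X$ and $Y$ are reflexive; moreover, every unbounded subset $B\subset X$ contains a simple end, since unboundedness in this structure means $\inf_{b\in B}\dist(b,\bar X\setminus X)=0$, and one may extract distinct $b_n\in B$ with $\dist(b_n,\bar X\setminus X)\to 0$. Since $f$ is assumed to preserve bounded sets, the work reduces to establishing preservation of simple ends and of their equivalence.

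First I would unpack \ref{MetricExtensionSCS}: a sequence $\{x_n\}$ in $X$ is a simple end iff $\dist(x_n,\bar X\setminus X)\to 0$, and two simple ends $\{x_n\},\{y_n\}$ are equivalent iff $d_X(x_n,y_n)\to 0$. Given a simple end $\{x_n\}$, I choose $c_n\in\bar X\setminus X$ with $d_X(x_n,c_n)\to 0$ by infimum approximation. The uniform continuity of $\bar f$ on $\bar X\setminus X$ says that for each $\epsilon>0$ there exists $\delta>0$ such that $d_Y(\bar f(a),\bar f(b))<\epsilon$ whenever $a\in\bar X\setminus X$ and $d_X(a,b)<\delta$. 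Applied with $a=c_n$ and $b=x_n$, this yields $d_Y(\bar f(c_n),f(x_n))\to 0$. Since $\bar f(c_n)\in\bar Y\setminus Y$, we conclude $\dist(f(x_n),\bar Y\setminus Y)\to 0$, so $\{f(x_n)\}$ is a simple end in $Y$.

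For preservation of equivalence, I would take equivalent simple ends $\{x_n\},\{y_n\}$ together with the same approximants $c_n\in\bar X\setminus X$; then $d_X(c_n,y_n)\le d_X(c_n,x_n)+d_X(x_n,y_n)\to 0$, so the uniform continuity hypothesis applies to both pairs $(c_n,x_n)$ and $(c_n,y_n)$, and a triangle inequality delivers $d_Y(f(x_n),f(y_n))\to 0$. With all three conditions of the cited corollary verified, $f$ is coarse bornologous.

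The main delicate point will be exploiting the \emph{mixed} form of uniform continuity: the hypothesis concerns $\bar f$ only on the corona $\bar X\setminus X$, yet we must control quantities of the form $d_Y(\bar f(c_n),f(x_n))$ with $x_n\in X$. This is legitimate because the definition of uniform continuity on a subset requires only one endpoint to lie in that subset, and because the defining property of a simple end forces $\dist(x_n,\bar X\setminus X)\to 0$, producing the required corona approximants $c_n$ whose distance to $x_n$ falls below any prescribed $\delta$ eventually.
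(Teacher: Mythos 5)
Your proof is correct and takes essentially the same route as the paper's: both exploit the mixed form of uniform continuity on the corona, passing through nearby corona points $c_n$ and the triangle inequality to obtain preservation of simple ends (equivalently, co-preservation of bounded sets) and of their equivalence, then invoke the characterization of coarse bornologous maps. If anything, your explicit use of the criterion $\dist(x_n,\bar X\setminus X)\to 0$ is a slightly cleaner version of the paper's opening step, which phrases end-preservation in terms of sequences converging to an actual point of $\bar X\setminus X$.
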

\begin{proof}
Let $A:=\bar X\setminus X$ and $B:=\bar Y\setminus Y$.
Given a sequence $\{x_n\}_{n\ge 1}$ in $X$ converging to a point $a\in A$, $\bar f(x_n)$ converges to $\bar f(a)$. Therefore point-inverse under $\bar f$ of a bounded set in $Y$ is bounded in $X$.

Suppose
$\{x_n\}_{n=1}^\infty$ and $\{y_n\}_{n=1}^\infty$ are two equivalent simple ends in $X$
such that $\{\bar f(x_n)\}_{n=1}^\infty$ and $\{\bar f(y_n)\}_{n=1}^\infty$ are not equivalent.
Without loss of generality we may assume existence of $\epsilon > 0$ such that
$d_Y(f(x_n),f(y_n)) > 2\cdot \epsilon$ for each $n\ge 1$. Find $\delta > 0$ such that
$d_Y(\bar f(a),\bar f(b)) < \epsilon$ if $a\in A$ and $b\in \bar X$ are at distance less than $\delta$.
We can find $n$ large enough that $d_X(x_n,y_n) < \delta/2$ and there is $a\in A$ so that $d_X(a,x_n) < \delta/ 2$. Therefore, $d_Y(\bar f(a),\bar f (x_n)) <\epsilon$ and $d_Y(\bar f(a),\bar f (y_n)) <\epsilon$ resulting in $d_Y(\bar f(y_n),\bar f (x_n)) < 2\cdot \epsilon$, a contradiction.
Thus, $f:X\to Y$ is coarse bornologous.
\end{proof}

\begin{Theorem}
Suppose $(\bar X,d_X)$, $(\bar Y,d_Y)$ are metric spaces and $X\subset \bar X$, $Y\subset \bar Y$ are proper dense subsets. Consider induced simple coarse structures on $X$ and $Y$ as described in \ref{MetricExtensionSCS}.\\
A coarse bornologous function $f:X\to Y$ extends uniquely to $\bar f:\bar X\to \bar Y$ that is uniformly continuous on $\bar X\setminus X$ and $\bar f(\bar X\setminus X)\subset \bar Y\setminus Y$ if $\bar Y\setminus Y$ is complete.
\end{Theorem}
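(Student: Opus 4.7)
The plan is to define $\bar f$ on $\bar X\setminus X$ pointwise by taking limits along approximating sequences from $X$, and then verify all required properties. For $a\in\bar X\setminus X$, fix any sequence $x_n\in X$ with $x_n\to a$ in $\bar X$. Since $a\notin X$, we have $\dist(x_n,\bar X\setminus X)\to 0$, so $\{x_n\}_{n=1}^\infty$ is a simple end for the bounded structure of Example~\ref{MetricExtensionSCS}. Coarse bornologousness of $f$ then guarantees that $\{f(x_n)\}_{n=1}^\infty$ is a simple end in $Y$, whence $\dist(f(x_n),\bar Y\setminus Y)\to 0$.

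The first key step is to show $\{f(x_n)\}_{n=1}^\infty$ is Cauchy in $\bar Y$. If it is not, one passes to subsequences $\{x_{n_k}\}$ and $\{x_{m_k}\}$ whose $f$-images stay at distance at least some $\epsilon>0$; both still converge to $a$, so $d_X(x_{n_k},x_{m_k})\to 0$, making them equivalent simple ends in $X$. Since $f$ preserves equivalence of simple ends, their $f$-images are equivalent simple ends in $Y$, forcing $d_Y(f(x_{n_k}),f(x_{m_k}))\to 0$, a contradiction. Now pick $b_n\in\bar Y\setminus Y$ with $d_Y(f(x_n),b_n)\to 0$; the triangle inequality makes $\{b_n\}$ Cauchy in $\bar Y\setminus Y$, and completeness yields $b_n\to b\in\bar Y\setminus Y$. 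Hence $f(x_n)\to b$ in $\bar Y$, and we set $\bar f(a):=b$. Independence from the choice of approximating sequence follows from the same equivalence argument applied to two sequences converging to $a$, and $\bar f(\bar X\setminus X)\subset\bar Y\setminus Y$ is automatic.

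To establish uniform continuity of $\bar f$ on $\bar X\setminus X$, assume it fails with witnesses $a_n,a'_n\in\bar X\setminus X$, $d_X(a_n,a'_n)\to 0$ but $d_Y(\bar f(a_n),\bar f(a'_n))\ge\epsilon$. Using the construction of $\bar f$, pick $x_n,x'_n\in X$ with $d_X(x_n,a_n),d_X(x'_n,a'_n)\to 0$ and $d_Y(f(x_n),\bar f(a_n)),d_Y(f(x'_n),\bar f(a'_n))\to 0$. Then $d_X(x_n,x'_n)\to 0$ and both $\{x_n\},\{x'_n\}$ are simple ends in $X$; being equivalent, their $f$-images satisfy $d_Y(f(x_n),f(x'_n))\to 0$, contradicting the lower bound on $d_Y(\bar f(a_n),\bar f(a'_n))$. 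Uniqueness follows from the same boundary-to-$X$ approximation: any competing extension $\tilde f$ with the stated properties must take the same value on $a\in\bar X\setminus X$, pinned down by the Cauchy-completion step on $\bar Y\setminus Y$.

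The main obstacle is that $\bar f$ is not required to be continuous across the interface between $X$ and $\bar X\setminus X$, so every compatibility statement has to be routed through the coarse-bornologous property, using uniform continuity on $\bar X\setminus X$ and completeness of $\bar Y\setminus Y$ rather than a direct topological extension on all of $\bar X$. Completeness of $\bar Y\setminus Y$ is used precisely to guarantee that Cauchy sequences in $Y$ that approach the boundary actually converge there, providing the unique candidate for $\bar f(a)$.
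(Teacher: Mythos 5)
Your construction of $\bar f$ on the boundary is essentially the paper's: take $x_n\in X$ converging to $a$, show $\{f(x_n)\}$ is Cauchy via equivalence of simple ends, push to nearby boundary points and use completeness of $\bar Y\setminus Y$. That part is fine. The gap is in what you prove about uniform continuity, and it traces back to a misreading of the statement. The paper defines ``uniformly continuous on a subset $A$'' in a non-standard, mixed sense: for every $\epsilon>0$ there is $\delta>0$ such that $d(a,b)<\delta$ with $a\in A$ and $b$ \emph{anywhere in the ambient space} implies $d(f(a),f(b))<\epsilon$. So ``$\bar f$ is uniformly continuous on $\bar X\setminus X$'' must control pairs $(a,x)$ with $a\in\bar X\setminus X$ and $x\in X$, where $\bar f(x)=f(x)$. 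You only treat boundary--boundary pairs and explicitly disclaim the interface (``$\bar f$ is not required to be continuous across the interface''), so you prove a strictly weaker statement than the theorem asserts; this mixed condition is also exactly what the companion (converse) theorem consumes, so it cannot be dropped.

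This omission also undermines your uniqueness argument. Under your reading (only the restriction of $\bar f$ to the boundary is uniformly continuous, plus $\bar f(\bar X\setminus X)\subset\bar Y\setminus Y$), uniqueness is simply false: one could replace the boundary values by any other uniformly continuous map into $\bar Y\setminus Y$ without violating your conditions, since nothing ties the boundary values to $f$ on $X$. It is precisely the mixed uniform continuity that pins down $\bar f(a)=\lim f(x_n)$ for $x_n\to a$ and hence forces uniqueness. The missing step is the paper's contradiction argument: if the mixed condition fails, there are $y_n\in\bar X\setminus X$ and $x_n\in X$ with $d_X(x_n,y_n)\to 0$ but $d_Y(f(x_n),\bar f(y_n))>\epsilon$; approximating each $y_n$ by $z_n\in X$ with $d_X(y_n,z_n)<d_X(y_n,x_n)$ and $d_Y(\bar f(y_n),f(z_n))<\epsilon/2$ (possible by the definition of $\bar f$), one gets equivalent simple ends $\{x_n\}$, $\{z_n\}$ whose images stay at distance at least $\epsilon/2$, contradicting preservation of equivalence. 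Add this case (your boundary--boundary argument then follows as an easy consequence), and restate uniqueness using the mixed condition.
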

\begin{proof}
Suppose $f:X\to Y$ is coarse bornologous. Given $x\in A$ choose a sequence $\{x_n\}_{n\ge 1}$ in $X$ converging to $x$. Notice $\{f(x_n)\}_{n\ge 1}$ is a Cauchy sequence.
Choose $y_n\in B$ so that $d_Y(f(x_n),y_n) < 2\cdot dist(f(x_n),B)$. Notice $\{y_n\}_{n\ge 1}$ is a Cauchy sequence in $B$, hence it converges to $y\in B$. We define $\bar f(x)$ to be $y$. 
$\bar f$ is well-defined as $f$ preserves equivalency of simple ends.

Notice $\bar f$ is continuous at each point of $A$ due to the way $\bar f$ was defined.
If $\bar f$ is not uniformly continuous on $A$, we can find a simple end $\{x_n\}_{n\ge 1}$ in $X$
and a sequence $\{y_n\}_{n\ge 1}$ in $B$ such that $\lim\limits_{n\to\infty}d_X(x_n,y_n)=0$
yet $d_Y(f(x_n),\bar f(y_n)) > \epsilon$ for all $n\ge 1$ and some fixed $\epsilon > 0$.
For each $n\ge 1$ we can find $z_n\in X$ so that $d_X(y_n,z_n) < d_X(y_n,x_n)$
and $d_Y(\bar f(y_n),f(z_n)) < \epsilon/2$. In that case
$\{z_n\}_{n\ge 1}$ is a simple end equivalent to $\{x_n\}_{n\ge 1}$. However, simple ends
$\{f(z_n)\}_{n\ge 1}$ and $\{f(x_n)\}_{n\ge 1}$ are not equivalent, a contradiction.
\end{proof}

\begin{Theorem}\label{ExtendingFunctionsFromTheBoundary}
Suppose $(\bar X,d_X)$, $(\bar Y,d_Y)$ are metric spaces and $X\subset \bar X$, $Y\subset \bar Y$ are proper dense subsets. Consider induced simple coarse structures on $X$ and $Y$ as described in \ref{MetricExtensionSCS}.\\ 
If $Y$ is open in $\bar Y$ and there is a sequence of subsets $Y_n$
converging to $\bar Y\setminus Y$ in the Hausdorff metric such that for some decreasing sequence
$\epsilon_n$ of positive numbers $Y_n$ is outside the $\epsilon_n$-ball around $\bar Y\setminus Y$, then
any uniformly continuous function $g:\bar X\setminus X\to \bar Y\setminus Y$ extends uniquely
(up to closeness) to a function $\bar g:\bar X\to \bar Y$
such that \\
1. $\bar g(X)\subset Y$, \\
2. $\bar g$ is uniformly continuous on $\bar X\setminus X$,\\
3. $\bar g|X:X\to Y$ is a coarse bornologous function.
\end{Theorem}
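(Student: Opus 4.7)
The plan is to prove this by an explicit construction of $\bar g$. Set $A := \bar X \setminus X$, $B := \bar Y \setminus Y$, and let $\delta_n$ denote the Hausdorff distance between $Y_n$ and $B$; by hypothesis $\delta_n \to 0$, and after passing to a subsequence I may assume $\delta_n$ is decreasing. Since $Y_n$ avoids an $\epsilon_n$-neighborhood of $B$ and $Y$ is open in $\bar Y$, $Y_n \subset Y$. Since every singleton of $X$ must be bounded, $\dist(x, A) > 0$ for every $x \in X$, so $X$ is open in $\bar X$. The extension will be built by setting $\bar g := g$ on $A$ and, for each $x \in X$, making three choices: writing $r(x) := \dist(x, A)$, pick (i) a point $a(x) \in A$ with $d_X(x, a(x)) < 2 r(x)$; (ii) an index $n(x) := \min\{ n : \delta_n \le r(x) \}$; and (iii) a point $\bar g(x) \in Y_{n(x)}$ with $d_Y(\bar g(x), g(a(x))) < 2 r(x)$, which exists because $g(a(x)) \in B$ and the Hausdorff distance from $Y_{n(x)}$ to $B$ is $\delta_{n(x)} \le r(x)$. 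Property (1) then follows from $Y_{n(x)} \subset Y$.

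Property (2) will be checked as follows. Uniform continuity of $\bar g$ on $A$ is inherited directly from $g$. For continuity at $x_0 \in A$ along a sequence $z_k \to x_0$ in $\bar X$, I will split into the $A$- and $X$-parts: on the $A$-part $\bar g = g$ is continuous, and on the $X$-part $r(z_k) \to 0$ forces $a(z_k) \to x_0$, hence $g(a(z_k)) \to g(x_0)$ by continuity of $g$, while $d_Y(\bar g(z_k), g(a(z_k))) < 2 r(z_k) \to 0$ by construction.

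For property (3), I will invoke the corollary following Proposition \ref{BornologousInSCC} (whose hypotheses --- reflexivity and ``every unbounded subset contains a simple end'' --- are routine here) and verify that $\bar g|_X$ preserves bounded sets, simple ends, and equivalence of simple ends. If $S \subset X$ is bounded, then $r \ge \epsilon$ on $S$, so $n(x) \le N$ on $S$ for some $N$, and $\bar g(S) \subset \bigcup_{n \le N} Y_n$ lies outside a positive neighborhood of $B$. If $\{x_k\}$ is a simple end in $X$, then $r(x_k) \to 0$, hence $n(x_k) \to \infty$ and $\dist(\bar g(x_k), B) \le \delta_{n(x_k)} \to 0$. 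If $\{x_k\}, \{y_k\}$ are equivalent simple ends (i.e. $d_X(x_k, y_k) \to 0$), the triangle inequality gives $d_X(a(x_k), a(y_k)) \le 2 r(x_k) + d_X(x_k, y_k) + 2 r(y_k) \to 0$; uniform continuity of $g$ on $A$ then yields $d_Y(g(a(x_k)), g(a(y_k))) \to 0$, and combining with the defining bound $d_Y(\bar g(\cdot), g(a(\cdot))) < 2 r(\cdot)$ gives $d_Y(\bar g(x_k), \bar g(y_k)) \to 0$.

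Uniqueness up to closeness will reduce to the observation that any competing extension $\bar g'$ agrees with $\bar g$ on $A$, and by continuity of $\bar g'$ at points of $A$ is forced to track $\bar g$ asymptotically along every simple end of $X$, giving $d_Y(\bar g(x_k), \bar g'(x_k)) \to 0$. The main obstacle --- and the reason both halves of the hypothesis are used --- is the calibration of $n(x)$: it must grow quickly enough as $x$ approaches $A$ to secure continuity at $A$ and preservation of equivalence, yet remain bounded on each bounded subset of $X$ to secure preservation of bounded sets. The Hausdorff convergence $Y_n \to B$ supplies the first requirement, and the quantitative gap $\epsilon_n$ supplies the second.
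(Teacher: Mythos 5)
Your construction is essentially the paper's own: send $x\in X$ to a point of a calibrated $Y_{n(x)}$ lying within $2r(x)$ of $g(a(x))$, where $a(x)\in A$ satisfies $d_X(x,a(x))<2\dist(x,A)$, and then verify (1)--(3) and uniqueness via preservation of bounded sets, simple ends, and equivalence of ends. The only sharpening worth noting is that the paper's ``uniformly continuous on $\bar X\setminus X$'' also constrains mixed pairs $(a,x)$ with $a\in A$, $x\in X$, and that simple ends need not converge to a single point of $A$; but your estimates $d_X(x,a(x))<2r(x)$ and $d_Y(\bar g(x),g(a(x)))<2r(x)$, together with uniform continuity of $g$, immediately give this uniform statement and likewise repair the uniqueness step, exactly as in the paper.
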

\begin{proof}
Let $A:=\bar X\setminus X$ and $B:=\bar Y\setminus Y$.
Send all points in $X$ whose distance to $A$ is at least $1$ to some fixed point $y_0\in Y$.
Given $m > 0$ and $x\in X$ at the distance from $A$ belonging to $[1/(m+1),1/m)$, find $n$ such that
the Hausdorff distance from $Y_n$ is smaller than $1/m$, then
find a point $x'\in A$ so that $d_X(x,A) > 0.5\cdot d_X(x,x')$, and finally
 find a point $y'\in Y_n$ such that $d_Y(g(x'),y') < 1/m$.
Define $\bar g(x)$ as $y'$. Notice $\bar g$ is uniformly continuous on $\bar X\setminus X$.

Notice $\{x_n\}_{n=1}^\infty$ is a simple end in $X$ if and only if $\lim\limits_{n\to\infty}d_X(x_n,A)=0$. From the construction we get that $\lim\limits_{n\to\infty}d_Y(\bar g(x_n),B)=0$.
That means $\bar g|X$ preserves simple ends. In particular, it co-preserves bounded sets.

From the construction we can see that $\bar g|X:X\to Y$ preserves bounded sets. Thus, $\bar g|X$ is a coarse bornologous function.

If $h:\bar X\to \bar Y$ is another extension of $g$ satisfying Conditions 1)-3) above,
then all we have to show that $\{h(x),\bar g(x)\}_{x\in X}$ is a uniformly bounded family in $Y$.
Indeed, otherwise there is a sequence $\{x_n\}_{n\ge 1}$ in $X$ converging to $a\in A$
such that $\{h(x_n)\}_{n\ge 1}$ is not equivalent to $\{\bar g(x_n)\}_{n\ge 1}$. However, both these sequences converge to $g(a)$, a contradiction.
\end{proof}

\begin{Corollary}
Suppose $(\bar X,d_X)$, $(\bar Y,d_Y)$ are metric spaces and $X\subset \bar X$, $Y\subset \bar Y$ are proper dense subsets. Consider induced simple coarse structures on $X$ and $Y$ as described in \ref{MetricExtensionSCS}.\\ 
If $Y$ is open in $\bar Y$ and and $\bar Y\setminus Y$ is totally bounded in the metric $d_Y$, then
any uniformly continuous function $g:\bar X\setminus X\to \bar Y\setminus Y$ extends uniquely
(up to closeness) to a function $\bar g:\bar X\to \bar Y$
such that \\
1. $\bar g(X)\subset Y$, \\
2. $\bar g$ is uniformly continuous on $\bar X\setminus X$,\\
3. $\bar g|X:X\to Y$ is a coarse bornologous function.
\end{Corollary}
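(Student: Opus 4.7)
My plan is to reduce this corollary to Theorem~\ref{ExtendingFunctionsFromTheBoundary} by constructing a sequence $Y_n\subset Y$ with the properties required there; everything else will then follow from that theorem applied to $g$. Write $B:=\bar Y\setminus Y$. Since $Y$ is open in $\bar Y$, $B$ is closed, so $\dist(y,B)>0$ for every $y\in Y$; since $Y$ is dense in $\bar Y$, points of $Y$ approximate every point of $B$ arbitrarily well in $d_Y$.

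The construction of $Y_n$ is where total boundedness enters. For each $n\ge 1$ I would first use total boundedness of $B$ in the metric $d_Y$ to pick a finite $F_n=\{b^n_1,\dots,b^n_{k_n}\}\subset B$ that is $1/n$-dense in $B$. Then, by density of $Y$ in $\bar Y$, for each $b^n_i\in F_n$ I would choose $y^n_i\in Y$ with $d_Y(y^n_i,b^n_i)<1/n$, and set $Y_n:=\{y^n_1,\dots,y^n_{k_n}\}$. A triangle-inequality estimate then gives that the Hausdorff distance between $Y_n$ and $B$ is at most $2/n$, so $Y_n\to B$ in the Hausdorff metric.

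The only remaining point is to produce the required decreasing sequence $\epsilon_n>0$ with $Y_n$ lying outside the $\epsilon_n$-neighborhood of $B$. Since $Y_n$ is finite and each $y^n_i$ sits in the open set $Y$, the quantity $\alpha_n:=\dist(Y_n,B)$ is a positive minimum of finitely many positive numbers. Defining $\epsilon_n$ recursively by $\epsilon_1:=\alpha_1$ and $\epsilon_n:=\min(\alpha_n,\epsilon_{n-1}/2)$ gives a decreasing sequence of positive numbers with $Y_n$ outside the $\epsilon_n$-ball around $B$. Applying Theorem~\ref{ExtendingFunctionsFromTheBoundary} to these $Y_n$ and $\epsilon_n$ then produces $\bar g:\bar X\to\bar Y$ with all three required properties and with uniqueness up to closeness.

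The main obstacle is precisely the simultaneous requirement that $Y_n$ be Hausdorff-close to $B$ while staying uniformly positive distance from $B$ at each scale $n$; total boundedness is exactly what lets me approximate $B$ by \emph{finite} sets at each scale, and finiteness then buys the positive lower bound on $\dist(Y_n,B)$ for free. Without total boundedness the naive construction would fail because an infinite net inside $Y$ approaching $B$ could have $\dist(Y_n,B)=0$, breaking the hypothesis of Theorem~\ref{ExtendingFunctionsFromTheBoundary}.
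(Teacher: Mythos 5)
Your argument is correct and matches the paper's proof in essence: both use total boundedness to get a finite $1/n$-scale approximation of $\bar Y\setminus Y$, density of $Y$ to replace those points by points of $Y$, and finiteness of $Y_n$ together with closedness of $\bar Y\setminus Y$ to secure the positive distances $\epsilon_n$, before invoking Theorem~\ref{ExtendingFunctionsFromTheBoundary}. Your explicit recursive choice making $\epsilon_n$ decreasing is a minor tidying of a point the paper leaves implicit.
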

\begin{proof}
Given $n\ge 1$ cover $\bar Y\setminus Y$ by finitely many balls of radius $1/n$ centered in some points of $\bar Y$. In each ball select a point belonging to $Y$ and declare the set of all such points to be $Y_n$. Notice $\{Y_n\}_{n\ge 1}$
converges to $\bar Y\setminus Y$ in the Hausdorff metric such that for some decreasing sequence
$\epsilon_n$ of positive numbers $Y_n$ is outside the $\epsilon_n$-ball around $\bar Y\setminus Y$.
\end{proof}

\begin{Definition}\label{UniformDimensionDef}
Let $(X,d)$ be a metric space and $n\ge 0$. The \textbf{uniform dimension} of $(X,d)$ is at most $n$ if every uniform cover $\mathcal{U}$ of $X$ has a uniform refinement $\mathcal{V}$ such that each point $x\in X$ belongs to at most $n+1$ elements of $\mathcal{V}$.

A uniform cover $\mathcal{W}$ of $X$ is a cover such that for some $r > 0$ the cover $\{B(x,r)\}_{x\in X}$
of $X$ by all $r$-balls refines $\mathcal{W}$. Thus, all open covers of a compact metric space $X$ are uniform and the uniform dimension of $X$ equals its covering dimension.
\end{Definition}

\begin{Theorem}\label{AsdimVsUniformDimThm}
Suppose $(\bar X,d_X)$ is a metric space and $X\subset \bar X$ is a proper open dense subset such that there is a sequence of subsets $X_n$
converging to $\bar X\setminus X$ in the Hausdorff metric so that for some decreasing sequence
$\epsilon_n$ of positive numbers each $X_n$ is outside the $\epsilon_n$-ball around $\bar X\setminus X$. Consider the induced simple coarse structure on $X$ as described in \ref{MetricExtensionSCS}.\\ 
1.) The uniform dimension of $\bar X\setminus X$ is at most the asymptotic dimension of $X$.\\
2). If the uniform dimension of $\bar X\setminus X$ equals $k\ge 0$, then
the asymptotic dimension of $X$ is at most $2+3\cdot k$.
\end{Theorem}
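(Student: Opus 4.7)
The strategy for both parts is to translate between uniform covers of $A:=\bar X\setminus X$ (those admitting a positive Lebesgue number in $d_X$) and uniformly bounded covers of $X$ in the large scale structure $\mathbb{LSS}(\mathcal{SCS}_X)$ (whose elements shrink in diameter as they approach $A$). A nearest-point projection $\pi\colon X\to A$ is the bridge: since $d_X(x,\pi(x))=\dist(x,A)\to 0$ along any simple end, the sequences $\{x_n\}_{n\ge 1}$ and $\{\pi(x_n)\}_{n\ge 1}$ are equivalent simple ends, so $\pi$ carries uniformly bounded families on $X$ to uniform families on $A$ and pulls uniform covers of $A$ back to uniformly bounded families on $X$. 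The hypothesis on $\{X_n\}$ ensures that such a $\pi$ can be selected coherently at every scale, with error controlled by $\epsilon_n$ on each shell.

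For part (1), assume $\asdim(X)\le n$ and let $\mathcal{W}$ be a uniform cover of $A$ of Lebesgue number $r>0$. For each $W\in\mathcal{W}$ set $\tilde W:=\pi^{-1}(W)\cap\{x\in X\mid \dist(x,A)<r/4\}$, and enlarge by the single bounded set $\{x\mid \dist(x,A)\ge r/8\}$ to obtain a cover $\mathcal{U}$ of $X$. One verifies $\mathcal{U}$ is uniformly bounded in $\mathbb{LSS}(\mathcal{SCS}_X)$: near $A$, the diameter of $\tilde W$ is controlled by $\diam(W)+2\dist(\cdot,A)$. Applying $\asdim(X)\le n$, refine $\mathcal{U}$ by a uniformly bounded $\mathcal{V}$ of multiplicity $\le n+1$. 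Then $\{\overline V\cap A\mid V\in\mathcal{V}\}$ covers $A$ with multiplicity $\le n+1$; it refines $\mathcal{W}$ because each $V$ lies in some $\tilde W$ and $\overline{\tilde W}\cap A\subset\overline W$, and it is a uniform refinement because the shrinking diameters of $\mathcal{V}$ near $A$ translate into a Lebesgue number on the $A$-side. This yields $\dim_u(A)\le n$.

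For part (2), let $\dim_u(A)\le k$ and take a uniformly bounded cover $\mathcal{U}$ of $X$; set $\delta(m):=\sup\{\diam(U)\mid U\in\mathcal{U},\,U\subset\{\dist(\cdot,A)<2^{-m}\}\}$, so that $\delta(m)\to 0$. Partition $X$ into dyadic shells $X_m=\{2^{-m-1}\le\dist(x,A)\le 2^{-m}\}$ plus a bounded trunk. Using $\dim_u(A)\le k$, for each $m$ choose a uniform cover $\mathcal{W}_m$ of $A$ of multiplicity $\le k+1$ and Lebesgue number at least $10\delta(m)$; pull $\mathcal{W}_m$ back through $\pi|_{X_m}$ to a cover $\mathcal{V}_m$ of $X_m$ of multiplicity $\le k+1$, calibrated to be large enough that $\mathcal{U}|_{X_m}$ refines $\mathcal{V}_m$. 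Group shells by $m\bmod 3$ so that shells of the same residue are separated by an intermediate shell, hence pairwise disjoint with a buffer: within each residue class the family $\bigcup_{m\equiv c}\mathcal{V}_m$ has multiplicity $\le k+1$, and the three classes combined form a uniformly bounded refinement of $\mathcal{U}$ of multiplicity at most $3(k+1)=3k+3$. This gives $\asdim(X)\le 3k+2$.

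The chief obstacle is the construction of $\pi$: when $\bar X$ admits no literal nearest points, one must select near-minimizers of $\dist(\cdot,A)$ coherently across shells so that the inflation $\tilde W$ in part (1) is genuinely uniformly bounded and so that the pull-backs $\mathcal{V}_m$ in part (2) stay close enough to their home shells $X_m$ for the three-coloring to deliver the multiplicity bound. A secondary technicality in part (2) is calibrating the Lebesgue numbers of the $\mathcal{W}_m$ against $\delta(m)$ and the inter-shell spacing so that shells of the same residue really contribute disjoint pieces to the final cover.
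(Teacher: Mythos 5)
Your part (1) does not go through as written. In the structure of \ref{MetricExtensionSCS}, every element of a uniformly bounded cover of $X$ must itself be a bounded set (its star with any singleton must be bounded), hence must stay at positive distance from $A=\bar X\setminus X$, and along any medium end the diameters must tend to $0$. Your sets $\tilde W=\pi^{-1}(W)\cap\{\dist(\cdot,A)<r/4\}$ violate both: they reach arbitrarily close to $A$, and their diameters are comparable to $\diam(W)$, which a uniform cover of $A$ does not control at all (uniform covers have a Lebesgue number but no mesh bound; $\{A\}$ itself is uniform). So the claim that $\mathcal{U}$ is uniformly bounded is false, and in any case asymptotic dimension supplies uniformly bounded \emph{coarsenings}, not refinements, so ``refine $\mathcal{U}$ by $\mathcal{V}$ of multiplicity $\le n+1$'' points the wrong way. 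Worse, even with a legitimate uniformly bounded $\mathcal{V}$ of multiplicity $\le n+1$, the traces $\overline V\cap A$ are all empty, precisely because each $V$ lies at positive distance from $A$. This is why the paper first uses \ref{ExtendingFunctionsFromTheBoundary} (this is where the hypothesis on the sets $X_n$, which you never invoke, actually enters) to replace $X$ by $A\times(0,1]$, and then restricts a multiplicity-$(k+1)$ coarsening of the cover $\{B(x,1/n)\times[1/(n+1),1/n]\}$ to an internal slice $A\times\{t\}$ close to the boundary, where the elements are small and the slice is an isometric copy of $A$ inside $X$ --- not to $A$ itself.

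In part (2) the shell-plus-residue-class scheme is the right shape (it is exactly how the paper arrives at $3k+3$), but the calibration you set aside is not a secondary technicality: you ask, for each $m$, for a uniform cover $\mathcal{W}_m$ of $A$ of multiplicity $\le k+1$ with Lebesgue number at least $10\delta(m)$, while the pullbacks $\pi^{-1}(W)\cap X_m$ must also have diameters tending to $0$ as $m\to\infty$ for the final cover to be uniformly bounded, i.e.\ the mesh of $\mathcal{W}_m$ must shrink. Uniform dimension only provides, for a prescribed ball-cover of $A$, a refinement of multiplicity $\le k+1$ whose Lebesgue number is \emph{some} positive $\delta$, not one you may prescribe; with the mesh constrained you cannot force it above $10\delta(m)$. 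The quantifiers must be reversed, as in the paper: fix once and for all covers $\mathcal{V}_j$ of $A$ refining the $1/j$-balls, of multiplicity $\le k+1$, with whatever Lebesgue numbers $\delta_j>0$ they happen to have, and then choose the shell depths (the inductive choice of $\mu_n$ and $\alpha(n)$) so deep that every $U\in\mathcal{U}$ meeting a given shell has diameter less than $\delta_{\alpha}/2$ and is swallowed by an element of $\mathcal{V}_{\alpha}\times(\mu_{n+2},\mu_{n-1}]$. Note also that pulling $\mathcal{W}_m$ back only over the exact shell $X_m$ cannot coarsen $\mathcal{U}$, since an element of $\mathcal{U}$ straddling two shells lies in no $\pi^{-1}(W)\cap X_m$; the shells must be fattened (the role of the overlapping intervals in the paper), after which the mod-$3$ count gives $3(k+1)$. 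Finally, the conclusion should be a uniformly bounded \emph{coarsening} of $\mathcal{U}$, not a refinement, which is the property asymptotic dimension requires.
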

\begin{proof}
Let $A:= \bar X\setminus X$ and $\bar A:=A\times [0,1]$ with the $l_1$-metric. Using \ref{ExtendingFunctionsFromTheBoundary} we can see that $X$ is coarsely equivalent
to $\bar A\setminus A\times\{0\}$, so we may simply assume $\bar X=A\times [0,1]$
and $X=A\times (0,1]$.\\
1).
Assume the asymptotic dimension of $X$ is $k$.
Consider the cover of $X$ defined as $\{B(x,1/n)\times [1/(n+1),1/n]\}_{x\in X, n\ge 1}$.
It has a uniformly bounded coarsening $\mathcal{V}$ such that each point $x\in X$ belongs to at most $k+1$ elements of $\mathcal{V}$. Given $r > 0$, there is $t\in (0,1]$ with the property
that every element of $\mathcal{V}$ intersecting $A\times \{t\}$ is of diameter less than $1/(2r)$.
Consequently, the restriction of $\mathcal{V}$ to $A\times \{t\}$ gives a uniform refinement
of the cover of $A\times \{t\}$ by $r$-balls such that every point in $A\times \{t\}$ belongs to at most $k+1$ elements of that cover.\\
2). Suppose the uniform dimension of $A$ is $k$. Find a decreasing sequence of positive numbers
$\{\delta_n\}_{n\ge 1}$ with the property that the cover of $A$ by $1/n$-balls has a refinement
$\mathcal{V}_n$ and the cover of $A$ by $\delta_n$-balls is a refinement of $\mathcal{V}_n$.
Moreover, each point $x\in A$ belongs to at most $k+1$ elements of $\mathcal{V}_n$.

Suppose $\mathcal{U}$ is a uniformly bounded cover of $A\times (0,1]$. Put $\mathcal{V}_{-1}=\mathcal{V}_0=\{A\}$, $\delta_0=\delta_{-1}=\infty$, and $\mu_0:=1=\mu_{-1}$.
We plan to find a strictly decreasing sequence $\{\mu_n\}_{n\ge 0}$ in $(0,1]$
and a strictly increasing sequence $\{\alpha(n)\}_{n\ge 0}$ of non-negative integers
so that the cover $\mathcal{W}:=\bigcup\limits_{n=0}^\infty \mathcal{V}_{\alpha(n)}\times (\mu_{n+2},\mu_{n-1}]$ coarsens $\mathcal{U}$. Obviously, $\mathcal{W}$ is a uniformly bounded cover of $A\times (0,1]$ and each point belongs to at most $3k+3$ elements of $\mathcal{W}$.

The crucial property we want to achieve is that if $U\in \mathcal{U}$ intersects
$A\times [\mu_{n+1},\mu_n]$, then it is contained in $A\times (\mu_{n+2},\mu_{n-1})$
and $U$ is of diameter at most $\delta_{\alpha(n-1)}/2$ which will guarantee that 
$U$ is contained in an element of $\mathcal{V}_{\alpha(n-1)}\times (\mu_{n+2},\mu_{n-1}]$.

Put $\mu_1=1/2$, $\alpha(0)=0$, and choose $\mu_2 < \mu_1$ so that
$st(A\times [\mu_1,\mu_0],\mathcal{U})\subset A\times (\mu_2,1]$.

Inductive step: Given $\mu_{n+1}$ for some $n\ge 1$ find $t > 0$ so that
$st(A\times [\mu_{n+1},1],\mathcal{U})\subset A\times (t,1]$. Find $m > \alpha(n+1)$ to achieve
$\delta_m < \mu_{n+1}-\mu_{n}$ and put $\alpha(n+2)=m$.
Then find $\mu_{n+2} < t$ so that $U\cap (A\times (0,\mu_{n+2}])\ne\emptyset$, $U\in \mathcal{U}$,
implies $diam(U) < \delta_{\alpha(n+2)}/2$.

Now, if $U\in \mathcal{U}$ intersects
$A\times [\mu_{n+2},\mu_{n+1}]$, then it is contained in $A\times (\mu_{n+3},1])$ and,
since $diam(U) < \delta_{\alpha(n)}/2 < \mu_{n+1}-\mu_{n}$,
$U\subset A\times (\mu_{n+3},\mu_{n}]$.
\end{proof}

\begin{Question}
Suppose $(A,d_A)$ is a metric space and $X:=A\times (0,1]\subset \bar X:=A\times [0,1]$. Consider the induced simple coarse structure on $X$ as described in \ref{MetricExtensionSCS}. If the uniform dimension of $A$ equals $k\ge 0$, is
the asymptotic dimension of $X$ is at most $k+1$?
\end{Question}

\begin{Question}
Suppose $(A,d_A)$ is a compact metric space and $X:=A\times (0,1]\subset \bar X:=A\times [0,1]$. Consider the induced simple coarse structure on $X$ as described in \ref{MetricExtensionSCS}. If the covering dimension of $A$ equals $k\ge 0$, is
the asymptotic dimension of $X$ is at most $k+1$?
\end{Question}

\begin{Observation}
One can use the idea of \ref{AsdimVsUniformDimThm} to introduce new topological properties of compact metric spaces $X$ via coarse properties of $X\times (0,1]$ equipped with the simple coarse structure as described in \ref{MetricExtensionSCS}.
In case a coarse property $\mathcal{P}$ is defined for metric spaces $M$ only one can generalize it to arbitrary large scale spaces $Y$ as follows: $Y$ has property $\mathcal{P}$ if given a uniformly bounded cover $\mathcal{U}_1$ of $Y$ there is a sequence $\{\mathcal{U}_n\}_{n\ge 1}$
of uniformly bounded covers of $Y$ so that $st(\mathcal{U}_n,\mathcal{U}_n)$ refines $\mathcal{U}_{n+1}$ and $Y$ equipped with the coarse structure generated by $\{\mathcal{U}_n\}_{n\ge 1}$ (which is metrizable) has property $\mathcal{P}$.

\end{Observation}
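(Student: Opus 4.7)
The Observation mostly packages a definition, so the only non-trivial mathematical claim requiring justification is the parenthetical assertion that a sequence $\{\mathcal{U}_n\}_{n\ge 1}$ of uniformly bounded covers of $Y$ with $\st(\mathcal{U}_n,\mathcal{U}_n)$ refining $\mathcal{U}_{n+1}$ generates a metrizable large scale structure. My plan is to produce an explicit $\infty$-pseudo-metric $\rho$ on $Y$ whose uniformly bounded families coincide with those refined by some $\mathcal{U}_n$, following the large-scale analogue of the Alexandroff--Urysohn metrization lemma.

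I would first define $\delta\colon Y\times Y\to [0,\infty]$ by $\delta(x,x)=0$, $\delta(x,y)=2^{n}$ when $n$ is the least index for which $\{x,y\}$ lies in some element of $\mathcal{U}_n$, and $\delta(x,y)=\infty$ otherwise. Then set $\rho(x,y)$ to be the infimum of $\sum_{i=1}^{k}\delta(z_{i-1},z_i)$ over all chains $x=z_0,z_1,\ldots,z_k=y$; this is an $\infty$-pseudo-metric satisfying $\rho\le \delta$. The star-refinement hypothesis yields a quasi-ultra-triangle inequality $\delta(x,z)\le 2\max\{\delta(x,y),\delta(y,z)\}$, because if $\{x,y\}\subset U\in\mathcal{U}_n$ and $\{y,z\}\subset V\in\mathcal{U}_n$, then $\{x,z\}\subset\st(U,\mathcal{U}_n)$, which in turn lies inside some element of $\mathcal{U}_{n+1}$. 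A routine induction on chain length converts this into the converse estimate $\delta(x,y)\le 2\rho(x,y)$. Hence the $\rho$-uniformly bounded covers and the covers refined by some $\mathcal{U}_n$ form cofinal families, so $\rho$ induces exactly the given large scale structure.

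The main obstacle is precisely this chain-summation step: the factor-of-$2$ slack in the quasi-triangle inequality has to be controlled along chains of arbitrary length, which is the familiar dyadic bookkeeping underlying Alexandroff--Urysohn. Once that is in place, the remainder of the Observation is purely definitional: for any uniformly bounded starting cover $\mathcal{U}_1$ one can iteratively take $\mathcal{U}_{n+1}:=\st(\mathcal{U}_n,\mathcal{U}_n)$, which by the second axiom of Definition \ref{LSStructureDef} is again uniformly bounded, so the existence of such a star-refining sequence is automatic, and declaring that $Y$ has property $\mathcal{P}$ whenever the resulting metric space does is a well-posed extension of $\mathcal{P}$ from metric spaces to arbitrary large scale spaces.
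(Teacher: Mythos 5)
The paper states this Observation without any proof, so there is nothing to compare against; as you correctly observe, the only claim with real mathematical content is the parenthetical metrizability of the structure generated by a star-refining sequence $\{\mathcal{U}_n\}_{n\ge 1}$, and your closing paragraph disposes of the rest correctly (axiom (2) of Definition \ref{LSStructureDef} makes the iterated stars $st(\mathcal{U}_n,\mathcal{U}_n)$ uniformly bounded). The strategy of producing an $\infty$-pseudo-metric whose uniformly bounded families are cofinal with the families refining some $\mathcal{U}_n$ is the right one. However, the step you call routine is exactly where the argument breaks: the two-link inequality $\delta(x,z)\le 2\max\{\delta(x,y),\delta(y,z)\}$ does not give the chain estimate $\delta\le 2\rho$ by induction on chain length. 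In the standard splitting argument (pick the largest $k$ with $\sum_{i\le k}\delta(z_{i-1},z_i)\le S/2$) one must recombine three pieces; with only the two-link inequality each recombination is two applications of the inequality and costs a factor $4$ while the pieces merely halve, so the inductive constant doubles at every level and the induction never closes. This is precisely why the classical metrization lemma (the chain lemma in Kelley's treatment) hypothesizes a three-link condition, i.e. $V\circ V\circ V$ contained in the next entourage, which in cover language is a double star-refinement, not the single one you have. A single star-refinement only yields $\delta(x,w)\le 4\max\{\delta(x,y),\delta(y,z),\delta(z,w)\}$, which is too weak for that induction.

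The gap is repairable in two ways. (a) Re-index: for $\mathcal{V}_n:=\mathcal{U}_{2n}$, three links at level $\mathcal{V}_n$ land inside an element of $\mathcal{V}_{n+1}$ (one star refinement absorbs two of the links into $\mathcal{U}_{2n+1}$, a second absorbs the third into $\mathcal{U}_{2n+2}$), so the $\delta$ built from $\{\mathcal{V}_n\}$ satisfies the three-link inequality and the classical chain lemma applies verbatim; since $\{\mathcal{V}_n\}$ generates the same large scale structure, this suffices. (b) Simpler, and tailored to the large-scale setting where additive slack is harmless: define $d(x,y)$ to be the least $n$ with $\{x,y\}$ contained in an element of $\mathcal{U}_n$ (with $d(x,x)=0$ and $d(x,y)=\infty$ if no such $n$ exists). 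A single star-refinement gives $d(x,z)\le\max\{d(x,y),d(y,z)\}+1\le d(x,y)+d(y,z)$, so $d$ is already an $\infty$-pseudo-metric with no chain construction at all; elements of $\mathcal{U}_n$ have $d$-diameter at most $n$, and any set of $d$-diameter at most $n$ lies in $st(U,\mathcal{U}_n)$ for an element $U$ through any of its points, hence in an element of $\mathcal{U}_{n+1}$, so the two structures coincide. Note that this last star step is also missing from your cofinality claim as stated: small $\rho$- (or $\delta$-) diameter only places every pair of points of a set, not the set itself, inside elements of $\mathcal{U}_n$, and one needs one more star (hence one more index) to conclude.
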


\section{Uniform structures on subsets of ends}

In \cite{Har} a uniform structure is defined on the space of ends that can be extended to a coarse embedding from the natural numbers to $X$. Proposition 60 there claims this uniformity has a countable base. However the second part of the proof directly contradicts the following:
\begin{Proposition}
Suppose $S\subset \mathbb{R}_+^\mathbb{N}$. If, for every $\xi:\mathbb{N}\to \mathbb{R}_+$ there is $\lambda\in S$ and $R\ge 0$ such that
$$\xi(i)-\lambda(i) \leq R$$
for all $i$, then $S$ is uncountable.
\end{Proposition}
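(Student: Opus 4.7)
The plan is to reformulate the hypothesis as a cofinality statement and then run the classical diagonal argument. Define a preorder on $\mathbb{R}_+^\mathbb{N}$ by $\xi\le^\ast\lambda$ iff $\sup_i(\xi(i)-\lambda(i))<\infty$; the hypothesis then says exactly that $S$ is cofinal in $(\mathbb{R}_+^\mathbb{N},\le^\ast)$. So the statement reduces to showing that this preorder has no countable cofinal subset.

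Concretely, I would argue by contradiction: assume $S=\{\lambda_n:n\in\mathbb{N}\}$ is countable and construct a single $\xi:\mathbb{N}\to\mathbb{R}_+$ that defeats every $\lambda_n$ at once. The natural choice is to make $\xi(i)$ grow faster at index $i$ than any of the finitely many candidates $\lambda_1,\dots,\lambda_i$ available so far, for instance
$$\xi(i):=i+\max_{1\le n\le i}\lambda_n(i).$$
Clearly $\xi(i)>0$ for all $i$, so $\xi\in\mathbb{R}_+^\mathbb{N}$. For any fixed $n\in\mathbb{N}$ and every $i\ge n$ the value $\lambda_n(i)$ appears in the maximum, whence $\xi(i)-\lambda_n(i)\ge i$. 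Thus $\sup_i(\xi(i)-\lambda_n(i))=\infty$ for each $n$, so no $\lambda_n\in S$ together with any $R\ge 0$ can satisfy $\xi(i)-\lambda_n(i)\le R$ for all $i$, contradicting the hypothesis applied to this particular $\xi$.

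I do not expect any genuine obstacle here; the content is entirely a one-shot diagonalization. The only details worth checking are that $\xi$ lands in $\mathbb{R}_+$ (immediate from the additive $i$ term, regardless of whether $\mathbb{N}$ starts at $0$ or $1$, since one may replace $i$ by $i+1$ if needed) and that the enumeration $S=\{\lambda_n\}_{n\in\mathbb{N}}$ is at our disposal, which is the countability assumption. The same construction in fact shows the stronger statement that $S$ cannot be cofinal even for the weaker preorder of eventual domination, but the version stated above is all that is needed.
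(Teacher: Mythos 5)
Your proof is correct and is essentially the same diagonalization against a countable enumeration that the paper uses. The only difference is bookkeeping: your $\xi(i)=i+\max_{1\le n\le i}\lambda_n(i)$ makes $\sup_i(\xi(i)-\lambda_n(i))=\infty$ for every $n$, defeating all bounds $R$ at once, whereas the paper reindexes $\mathbb{N}$ by $\mathbb{N}\times\mathbb{N}$ so as to defeat each pair $(\lambda,R)$ at a single coordinate via $\xi(i,R)=\lambda_{i,R}(i,R)+R+1$.
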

\begin{proof}
Assume $S$ is countable. We may assume $R$ is always natural. Also, we may replace $\mathbb{N}$ by any countable set, so let us replace it by $\mathbb{N}\times \mathbb{N}$.
Thus, $S\subset \mathbb{R}_+^{\mathbb{N}\times \mathbb{N}}$ and, for every $\xi:\mathbb{N}\times \mathbb{N}\to \mathbb{R}_+$, there is $\lambda\in S$ and $R\in \mathbb{N}$ such that
$$\xi(i)-\lambda(i) \leq R$$
for all $i$. Enumerate elements of $S$ as $\lambda_{i,R}$ and define
$$\xi(i,R)=\lambda_{i,R}(i,R)+R+1.$$
Notice that particular $\xi$ does not have any pair $\lambda\in S$, $R\in \mathbb{N}$ so that
$$\xi(i)-\lambda(i) \leq R$$
for all $i$.
\end{proof}

The aim of the remainder of this section is to offer a way of introducing two basic uniform structures on any quotient space of a subset $E$ of simple ends of a proper metric space $X$ (and this way can be easily generalized to arbitrary coarse space $X$ compatible with a locally compact topology on $X$). 

Let $q:E\to F$ be the quotient function on a subset $E$ of simple ends of a proper metric space $X$. Since $E\subset \bar X^{\mathbb{N}}$, where $\bar X$ is the Higson compactification of $X$, and $\bar X^{\mathbb{N}}$ is a compact space, it has the unique uniform structure inducing the topology on $\bar X^{\mathbb{N}}$. $E$ inherits that uniform structure. Now we have two options:\\
1. Declare $f:F\to M$, $M$ a compact metric space, to be uniformly continuous if and only if $f\circ q$ is uniformly continuous.\\
2. Declare $f:F\to M$, $M$ a metric space, to be uniformly continuous if and only if $f\circ q$ is uniformly continuous.

It is likely 1) is related to the uniform structure discussed in \cite{Har}.

\end{document}